\theoremstyle{plain}
\newtheorem{theorem}{Theorem}[section]
\newtheorem{lemma}[theorem]{Lemma}
\newtheorem{proposition}[theorem]{Proposition}
\newtheorem{corollary}[theorem]{Corollary}
\theoremstyle{definition}
\newtheorem{definition}[theorem]{Definition}
\newtheorem{thm}{Theorem}
\newtheorem{prop}{Proposition}
\theoremstyle{remark}
\newtheorem{remark}[theorem]{Remark}
\newtheorem{claim}[theorem]{Claim}
\newtheorem*{acknowledgments}{Acknowledgments}
\numberwithin{equation}{section}
\numberwithin{figure}{section}
\newcommand{\bd}{\begin{description}}   
\newcommand{\ed}{\end{description}} 
\newcommand{\ba}{\begin{array}}      \newcommand{\ea}{\end{array}} 
\newcommand{\bc}{\begin{center}}     \newcommand{\ec}{\end{center}} 
\newcommand{\be}{\begin{enumerate}}  \newcommand{\ee}{\end{enumerate}} 
\newcommand{\beq}{\begin{eqnarray}}  \newcommand{\eeq}{\end{eqnarray}} 
\newcommand{\beQ}{\begin{eqnarray*}} \newcommand{\eeQ}{\end{eqnarray*}} 
\newcommand{\bi}{\begin{itemize}}    \newcommand{\ei}{\end{itemize}}
\newcommand{\bfrac}[2]{{\raisebox{.1em}{$#1$} / \raisebox{-.3em}{$#2$}}}
\begin{document} 
\title[Link concordances as surfaces in $4$-space]{Link concordances as surfaces in $4$-space and the 4-dimensional Milnor invariants}
\author[J.B. Meilhan]{Jean-Baptiste Meilhan} 
\address{Univ. Grenoble Alpes, CNRS, IF, 38000 Grenoble, France}
	 \email{jean-baptiste.meilhan@univ-grenoble-alpes.fr}
\author[A. Yasuhara]{Akira Yasuhara} 
\address{Faculty of Commerce, Waseda University, 1-6-1 Nishi-Waseda, Shinjuku-ku, Tokyo 169-8050, Japan}
	 \email{yasuhara@waseda.jp}
\date{\today}
\begin{abstract}
Fixing two concordant links in $3$--space, we study the set of all embedded concordances between them, as knotted annuli in $4$--space. 
When regarded up to surface-concordance or link-homotopy,  the set $\mathcal{C}(L)$ of concordances from a link $L$ to itself forms a group. 
In order to investigate these groups, we define Milnor-type invariants of $\mathcal{C}(L)$, which are integers defined modulo a certain indeterminacy given by Milnor invariants of $L$. 
We show in particular that, for a slice link $L$,  these invariants classify $\mathcal{C}(L)$ up to link-homotopy. 
\end{abstract}

\maketitle
 \begin{center}
   \normalsize \em {Dedicated to the memory of Nathan Habegger}
 \end{center}
\bigskip

\section*{Introduction}

Throughout this paper, we work in the smooth category. 

Let $L$ and $L'$ be two ordered and oriented $n$-component links in the interior of the $3$-ball $B^3$.
The links $L$ and $L'$ are \emph{concordant} if there exists an embedding 
$C: \sqcup_{i} (S^1\times [0,1])_i\hookrightarrow B^3\times [0,1]$ of $n$ disjoint oriented annuli into  
$B^3\times [0,1]$ such that, for each $i$, the image of the $i$th copy of $S^1\times [0,1]$ 
intersects $B^3\times \{0\}$ along 
the $i$th component of $L$, and $B^3\times \{1\}$ along 
the $i$th component of $L'$, respecting the orientation. 
The image of such an embedding is called a \emph{concordance from $L$ to $L'$}.
Concordance defines an equivalence relation on the set of $n$-component links, 
which is widely studied in the literature. 

In this paper, we study link concordance from a different point of view, 
by considering the set of all concordances between two given links. 
We denote by $\mathcal{C}(L,L')$ the set of all concordances from $L$ to $L'$. 
Clearly, $\mathcal{C}(L,L')$ is nonempty if and only if the links $L$ and $L'$ are concordant. 

We will also consider the larger set $\mathcal{C}^s(L,L')$ of \emph{self-singular concordances} from $L$ to $L'$,  
that is, concordances where we allow each component to self-intersect  but where distinct components are disjoint.
Note that $\mathcal{C}^s(L,L')$ is nonempty if and only if the links $L$ and $L'$ are link-homotopic, 
i.e. are related by a sequence of self-crossing changes and isotopies, see \cite{Giffen,Goldsmith} and also \cite{H}.

We we shall sometimes use the term \lq embedded concordance\rq\, when referring specifically to elements of  $\mathcal{C}(L,L')$ which are \emph{not} in $\mathcal{C}^s(L,L')$. 

In the case $L=L'$, we simply denote $\mathcal{C}(L)=\mathcal{C}(L,L)$ and $\mathcal{C}^s(L)=\mathcal{C}^s(L,L)$. 
\medskip 

Let $L$, $L'$ and $L''$ be three concordant links. 
For $C\in \mathcal{C}(L,L')$ and $C'\in \mathcal{C}(L',L'')$, identifying the upper boundary of $C$ with the lower boundary of $C'$ yields the \emph{stacking product} $C\cdot C'$, which is an element of $\mathcal{C}(L,L'')$. 
The stacking product is of course more generally defined for self-singular concordances. 
\medskip

\subsection*{Equivalence relations} 
Two (self-singular) concordances are \emph{equivalent} if they are related by an ambient isotopy fixing the boundary. 
Note that this means in particular that $L$ and $L'$ are \emph{fixed} links in $B^3$, hence cannot be deformed by an ambient isotopy of $B^3$. 

The stacking product endows the set of equivalence classes of $\mathcal{C}^s(L)$ with a structure of monoid, containing those of $\mathcal{C}(L)$ as a submonoid, where the identity element is given by $L\times [0,1]$. 
In the special case where $L$ is the boundary $O_n$ of a disjoint union of $n$  Euclidian disks in a plane of $B^3$,  
then the set of equivalence classes of $\mathcal{C}(O_n)$ is the monoid of \emph{$2$-string links} introduced and studied in \cite{AMW}. 
\medskip

In this paper, we also consider the following two natural equivalence relations on (self-singular) concordances. 

Two elements $C,C'$ of $\mathcal{C}(L,L')$ are 
\emph{(surface-)concordant} if there exists an embedding 
$W: \sqcup_{i} \left((S^1\times [0,1])\times [0,1]\right)_i\hookrightarrow (B^3\times [0,1])\times [0,1]$
such that, for each $i$, the image of the $i$th copy of $(S^1\times [0,1])\times [0,1]$ 
intersects $(B^3\times [0,1])\times \{0\}$, resp. $(B^3\times [0,1])\times \{1\}$, along the $i$th component of $C$, resp. $C'$, respecting the orientation, and such that 
$W\left( (S^1\times \{0\})\times [0,1]\right) = (L\times \{0\})\times [0,1]$  
and 
$W\left( (S^1\times \{1\})\times [0,1]\right) = (L'\times \{1\})\times [0,1]$.
This defines an equivalence relation on $\mathcal{C}(L,L')$, which we denote by \raisebox{-.2em}{$\stackrel{c}{\sim}$}. 
In what follows, we will often blur the distinction between the embedding defining a surface-concordance, and its image.  

Two elements of $\mathcal{C}^s(L,L')$ are \emph{(surface-)link-homotopic} 
if they are homotopic through self-singular concordances, that is, 
if there exists a continuous deformation of one into the other 
such that distinct components remain disjoint at all time, but where self-intersections may occur. 
This defines another equivalence relation on $\mathcal{C}^s(L,L')$ and $\mathcal{C}(L,L')$, which we denote by \raisebox{-.2em}{$\stackrel{lh}{\sim}$}. 

We observe that the stacking product endows the quotient sets $\bfrac{\mathcal{C}(L)}{\stackrel{c}{\sim}}$,  
  $\bfrac{\mathcal{C}(L)}{\stackrel{\small{lh}}{\sim}}$  and $\bfrac{\mathcal{C}^s(L)}{\stackrel{\small{lh}}{\sim}}$ with a structure of monoid. 

The terminologies for these two equivalence relations emphasize the fact that these are relations for surfaces in $4$-space. 
This avoids possible confusions with the analogous relations for classical links, which are also involved in the discussion. 
In the rest of the paper, however, we shall mostly use the shorter 
terms \lq concordance\rq\, and \lq link-homotopy\rq\, for knotted surfaces when the context is explicit. 
\medskip 

For simplicity of  exposition, in the rest of this introduction we restrict to the case $L=L'$, but we emphasize that, in this paper, the constructions and most of the results are given in the more general context of two concordant links $L$ and $L'$. 

The following is mainly a combination of known facts and techniques, but it is a fundamental result: 
\begin{prop}\label{thm:groups}
Let $L$ be a link.
 \begin{itemize}
 \item[(i)]  The  monoids 
  $\bfrac{\mathcal{C}(L)}{\stackrel{c}{\sim}}$,  
  $\bfrac{\mathcal{C}(L)}{\stackrel{\small{lh}}{\sim}}$  
  and $\bfrac{\mathcal{C}^s(L)}{\stackrel{\small{lh}}{\sim}}$
  are all groups.
  \item[(ii)]  For any link $L'$ concordant to $L$, 
  there is an isomorphism $\bfrac{\mathcal{C}(L)}{\stackrel{c}{\sim}}\simeq \bfrac{\mathcal{C}(L')}{\stackrel{c}{\sim}}$. 
  \item[(iii)]  For any link $L'$ link-homotopic to $L$, 
  there is an isomorphism $\bfrac{\mathcal{C}^s(L)}{\stackrel{lh}{\sim}}\simeq \bfrac{\mathcal{C}^s(L')}{\stackrel{lh}{\sim}}$, which restricts to an isomorphism 
  $\bfrac{\mathcal{C}(L)}{\stackrel{lh}{\sim}}\simeq \bfrac{\mathcal{C}(L')}{\stackrel{lh}{\sim}}$. 
  \end{itemize}
\end{prop}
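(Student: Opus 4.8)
The plan is to prove the three assertions in turn, establishing (i) first since (ii) and (iii) rely on the same circle of ideas.

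For (i), the key point is the construction of inverses. Given $C\in\mathcal{C}(L)$, consider the concordance $\overline{C}$ obtained by reflecting $C$ in the $[0,1]$-direction (i.e. composing the embedding with $(x,t)\mapsto(x,1-t)$ on both the source annuli and the ambient $B^3\times[0,1]$) and reversing orientation. I claim $C\cdot\overline{C}$ and $\overline{C}\cdot C$ are both equivalent, up to the relevant equivalence, to the trivial concordance $L\times[0,1]$. The model argument is the standard ``movie'' picture: the stacked product $C\cdot\overline{C}$ is a one-parameter family that goes up through $C$ and then retraces its steps, and such a ``there-and-back'' surface in $(B^3\times[0,1])\times[0,1]$ bounds in the obvious way — one pushes the turning-back locus down to produce the surface-concordance (resp. surface-link-homotopy) to the product. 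This is precisely the argument showing that a knotted annulus of the form $C\cup_{L'}\overline{C}$ is slice rel boundary, adapted here to keep the boundary links fixed and to respect the constraint that distinct components stay disjoint (which is automatic for the concordance relation, and needs only that the homotopy in the link-homotopy case is performed component-wise, which the there-and-back model does). Associativity and the identity axiom for the monoid structure are already noted in the excerpt, so only invertibility remains, and it follows uniformly in all three cases from this reflection argument.

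For (ii), fix a concordance $A\in\mathcal{C}(L,L')$; since $L$ and $L'$ are concordant, $A$ exists, and by (i)-type reasoning $\overline{A}\in\mathcal{C}(L',L)$ satisfies $A\cdot\overline{A}\stackrel{c}{\sim}L\times[0,1]$ and $\overline{A}\cdot A\stackrel{c}{\sim}L'\times[0,1]$. Define $\Phi\colon\mathcal{C}(L)\to\mathcal{C}(L')$ by $\Phi(C)=\overline{A}\cdot C\cdot A$. One checks $\Phi$ is compatible with stacking (the inner $A\cdot\overline{A}$ cancels up to $\stackrel{c}{\sim}$), hence descends to a monoid homomorphism on the quotients, and that conjugation by $A$ in the other direction gives the inverse, so $\Phi$ is an isomorphism of groups. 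Statement (iii) is proved by the identical conjugation argument, now using an $A\in\mathcal{C}^s(L,L')$ — which exists exactly because $L$ and $L'$ are link-homotopic — and the $\stackrel{lh}{\sim}$ relation; the fact that conjugation by an honestly embedded $A$ preserves the sub-monoid $\mathcal{C}(L)\subset\mathcal{C}^s(L)$ gives the asserted restriction, and when $L$ and $L'$ are merely link-homotopic one takes $A$ self-singular, which still conjugates $\mathcal{C}^s(L)$ to $\mathcal{C}^s(L')$.

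The main obstacle is making the there-and-back cancellation rigorous while respecting \emph{all} the bookkeeping simultaneously: the boundary links must stay pointwise fixed (not merely setwise), the sidewalls of the surface-concordance $W$ must be the prescribed products $(L\times\{0\})\times[0,1]$ and $(L'\times\{1\})\times[0,1]$, and in the link-homotopy case distinct components must remain disjoint throughout — which is why the reflected-and-restacked model, rather than an abstract handle cancellation, is the right tool. Once the cancellation lemma is set up carefully, well-definedness of $\Phi$ on equivalence classes and the verification that it is a homomorphism are routine diagram-chases with stacked surfaces, and I would present those only in outline.
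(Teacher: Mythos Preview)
Your overall strategy matches the paper's: the reflection $\overline{C}$ furnishes inverses via the product $C\times[0,1]$ bounding in $B^5$, and conjugation by a fixed (self-singular) concordance gives the isomorphisms in (ii) and (iii). For the surface-concordance relation in (i), and for (ii)--(iii), your argument is essentially the paper's (Lemmas~\ref{lem:inverse} and~\ref{thm:iso}).

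There is, however, a genuine gap in your treatment of the link-homotopy case of (i). You assert that the there-and-back model produces a surface-link-homotopy because ``the homotopy \ldots\ is performed component-wise'', but this is not what the model does: pushing the turning-back locus down is a deformation through the ambient $4$--ball, and when you retract $C\cdot\overline{C}$ by truncating $C$ at a sublevel and restacking, the reparametrization of the height coordinate can force distinct components to cross. Concretely, if $C_i$ and $C_j$ project to the same point of $B^3$ at different heights in the truncated range, the collapse of that range identifies those points. Nothing in the reflected-and-restacked picture prevents this; the disjointness of components in $C$ uses the full height coordinate, which the retraction destroys.

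The paper does not attempt a direct link-homotopy here. It first builds the embedded $3$--manifold $C\times[0,1]$ in $B^5$, exactly as you do, to get $C\cdot\overline{C}\stackrel{c}{\sim}L\times[0,1]$, and then invokes the theorem of Bartels--Teichner \cite[Thm.~5]{BT} that surface-concordance implies surface-link-homotopy. That result is not elementary and is precisely what your component-wise claim would need to replace. You should either cite it, or supply a careful direct construction showing that \emph{this particular} surface-concordance can be sliced into a one-parameter family of self-singular concordances with pairwise disjoint components---which is possible but requires more than the sentence you give it.
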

This paper addresses the problem of classifying these quotients, by defining and studying some invariants for embedded concordances. 
\medskip

\subsection*{Artin-type invariants}
We first define representations of the concordance and link-homotopy groups of embedded concordances, that are modelled on Artin's representation of the braid group as automorphisms of the free group.

For each $k\ge 1$, denote by $N_k G=G / \Gamma_k G$ 
the \emph{$k$th nilpotent quotient} of a group  $G$, 
where $\Gamma_k G$ is the $k$th term of the \emph{lower central series} of $G$, 
which is defined inductively by $\Gamma_1 G=G$ and $\Gamma_{k+1}G = [G,\Gamma_k G]$. 

Let $C\in \mathcal{C}(L)$.  
Denote by $\pi_L$ the fundamental group of $B^3\setminus L$; recall that the $k$th nilpotent quotient of $\pi_L$ is generated by $n$ elements, given by the choice of a meridian for each component \cite{Chen}. 
As developed in Section \ref{sec:phik}, by considering the natural inclusions of this link complement at the \lq top\rq~ and \lq bottom\rq~ of $(B^3\times [0,1])\setminus C$, and by using a theorem of Stallings \cite[Thm.~5.1]{Stallings}, we obtain for each $k\ge 1$ a map 
  $$ \varphi^{L}_k: \mathcal{C}(L)\longrightarrow \textrm{Aut}\left(N_k \pi_L \right). $$
More precisely, $\varphi^L_k$ takes values in the subgroup $\textrm{Aut}_\textrm{c}\left(N_k \pi_L\right)$ of automorphisms acting by \lq conjugation on each meridian\rq~ -- see Section \ref{sec:longitude}. 

\begin{thm}\label{thm:surj}
 Let $k\ge 1$ be an integer. 
\begin{itemize}
 \item[(i)] The map $\varphi^{L}_k$ is a surface-concordance invariant. 
 \item[(ii)] 
 If $L$ is slice, then the induced map  
 $\bfrac{\mathcal{C}(L)}{\stackrel{\small{c}}{\sim}}\rightarrow \textrm{Aut}_\textrm{c}\left(N_k \pi_L \right)$ is a surjective  homomorphism. 
\end{itemize}
\end{thm}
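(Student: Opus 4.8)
The plan is to treat the two parts separately, since (i) is essentially a well-definedness statement while (ii) requires a surjectivity construction that exploits sliceness.

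For part (i), I would argue as follows. Given a surface-concordance $W$ between $C,C'\in\mathcal{C}(L)$, the relevant point is that $W$ is a concordance of annuli in $(B^3\times[0,1])\times[0,1]$ whose two ``vertical'' boundary pieces are the product cylinders $(L\times\{0\})\times[0,1]$ and $(L\times\{1\})\times[0,1]$. Applying Stallings' theorem to the inclusions of the link complement $B^3\setminus L$ into the complements of $C$, of $C'$, and of $W$, one sees that all four nilpotent quotients -- those of $\pi_L$ at top and bottom, and those of the complement of $W$ restricted to top and bottom -- are canonically identified. Since $\varphi^L_k(C)$ and $\varphi^L_k(C')$ are, by their very definition in Section~\ref{sec:phik}, read off from the two inclusions $B^3\setminus L\hookrightarrow (B^3\times[0,1])\setminus C$ at the two ends, and since these inclusions factor (up to the identifications above) through the complement of $W$, the two automorphisms agree. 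Concretely, the complement of $W$ provides a single group $\pi_1$ receiving both of the top/bottom inclusions used to build $\varphi^L_k(C)$ and the corresponding ones for $C'$, and a diagram chase on the induced maps of $k$th nilpotent quotients gives $\varphi^L_k(C)=\varphi^L_k(C')$. That $\varphi^L_k$ is a monoid homomorphism on $\mathcal{C}(L)$ with respect to stacking is the functoriality of the van Kampen / Stallings argument under gluing along $B^3\times\{1\}=B^3\times\{0\}$, so once it is a concordance invariant it descends to a homomorphism on $\bfrac{\mathcal{C}(L)}{\stackrel{c}{\sim}}$; by Proposition~\ref{thm:groups}(i) the source is a group, and the image of a group under a monoid homomorphism lies in $\Aut_{\textrm{c}}(N_k\pi_L)$.

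For part (ii), assume $L$ is slice, so we may as well replace $L$ by $O_n$ using Proposition~\ref{thm:groups}(ii), which identifies $\bfrac{\mathcal{C}(L)}{\stackrel{c}{\sim}}$ with $\bfrac{\mathcal{C}(O_n)}{\stackrel{c}{\sim}}$ compatibly with the maps $\varphi_k$ (this compatibility, that the isomorphism of Proposition~\ref{thm:groups}(ii) intertwines $\varphi^L_k$ and $\varphi^{O_n}_k$, should be checked -- it follows because conjugating by a fixed concordance from $L$ to $O_n$ induces the canonical Stallings identification $N_k\pi_L\simeq N_k\pi_{O_n}=N_k F_n$). Here $\pi_{O_n}$ is free of rank $n$ and $\Aut_{\textrm{c}}(N_k F_n)$ is the group of automorphisms sending each generator to a conjugate of itself; elements of $\mathcal{C}(O_n)$ are the $2$-string links of \cite{AMW}. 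Surjectivity then amounts to realizing every conjugating automorphism of $N_k F_n$ by a $2$-string link. I would do this by an explicit geometric construction: given words $w_1,\dots,w_n$ in $F_n$ prescribing the conjugators, build a string link in $B^3\times[0,1]$ whose $i$th component is the $i$th trivial cylinder pushed through a pattern that ``drags'' its meridian around the loop represented by $w_i$ -- concretely, band-sum the $i$th cylinder with a suitable null-homotopic-in-$B^3$ but nontrivially-linked loop, or equivalently use the standard fact that the pure braid group (viewed inside $2$-string links via the trivial closure) already surjects onto $\Aut_{\textrm{c}}(N_k F_n)$ because Artin's representation $P_m\to\Aut(F_m)$ hits all conjugating automorphisms after passing to nilpotent quotients. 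Reading $\varphi^{O_n}_k$ off such a braid-type concordance recovers exactly the prescribed conjugating automorphism, by the same van Kampen computation that underlies Artin's representation.

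The main obstacle is the surjectivity in part (ii): one must produce honest \emph{embedded} concordances (annuli, not self-singular ones) realizing arbitrary conjugating automorphisms of $N_k F_n$, and verify that the induced automorphism is the intended one rather than some correction term. The cleanest route is to reduce to the braid case -- exhibit a homomorphism from an appropriate pure braid group into $\bfrac{\mathcal{C}(O_n)}{\stackrel{c}{\sim}}$ (take a pure braid on $n$ strands, view it as an automorphism-realizing isotopy of $O_n$, and trace out the corresponding cylinder concordance) such that the composite with $\varphi^{O_n}_k$ is Artin's representation mod $\Gamma_k$ -- and then invoke the classical surjectivity of Artin's representation onto the conjugating automorphisms of $N_k F_n$. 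Checking that this composite is literally Artin's representation, and not merely equal to it up to an inner automorphism or a basis change, is the delicate bookkeeping step; it should follow by choosing compatible meridians at top and bottom and unwinding the definition of $\varphi^{O_n}_k$ from Section~\ref{sec:phik}.
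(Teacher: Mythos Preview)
Your argument for part (i) is essentially the paper's: apply Stallings' theorem to the inclusions of $B^3\setminus L$, $(B^3\times[0,1])\setminus C$, and $(B^3\times[0,1])\setminus C'$ into the complement of the surface-concordance $W$, and read off $\varphi^L_k(C)=\varphi^L_k(C')$ from the resulting commutative diagram of nilpotent quotients. The reduction in part (ii) to the case $L=O_n$ via the conjugation isomorphism of Proposition~\ref{thm:groups}(ii) is also what the paper does.

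The gap is in your surjectivity construction for $\varphi^{O_n}_k$. Your ``standard fact'' that Artin's representation of the pure braid group $P_n$ hits all of $\textrm{Aut}_\textrm{c}(N_kF_n)$ is false: the image of $P_n$ in $\textrm{Aut}(F_n)$ consists of basis-conjugating automorphisms that \emph{fix the product} $x_1x_2\cdots x_n$, and this constraint persists in every nilpotent quotient. For instance, the automorphism $x_1\mapsto x_1$, $x_2\mapsto x_1^{-1}x_2x_1$ of $N_3F_2$ does not fix $x_1x_2$ and hence is not realized by any classical pure braid (equivalently, it would have asymmetric length-$2$ Milnor numbers $\mu(12)\neq\mu(21)$, impossible for a classical string link). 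Your alternative band-sum sketch is too vague to fill this gap; you would need to explain why the resulting surfaces are embedded annuli and why the induced automorphism is the prescribed one.

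The paper resolves this by leaving the classical world: given arbitrary conjugating words $l_1,\dots,l_n$, one builds a \emph{welded} string link $D_f$ whose $i$th strand underpasses a sequence of arcs dictated by the letters of $l_i$ (virtual crossings allow the strands to be routed freely, with no product constraint), and then applies Satoh's Tube map to obtain an embedded element $C_f=\textrm{Tube}(D_f)\in\mathcal{C}(O_n)$. The computation $\varphi^{O_n}_k(C_f)=f$ is then the welded analogue of the Artin/Wirtinger calculation, carried out in \cite{ABMW}. The essential point you are missing is that $\textrm{Aut}_\textrm{c}(F_n)$ is the McCool group, realized by \emph{welded} (loop) braids rather than classical ones, and it is the Tube map that converts these diagrammatic objects into genuine embedded concordances in $4$--space.
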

Note that, if $L$ is a slice link, then $N_k \pi_L$ is isomorphic to $N_kF_n$, the $k$th nilpotent quotient of the free group $F_n$ on $n$ elements \cite{Milnor2}.  
\medskip

In order to obtain a link-homotopy invariant, we consider reduced groups. 
Given a group $G$ normally generated by $g_1,\cdots,g_k$, the \emph{reduced group} $\textrm{R}G$ is the quotient by the normal closure of all relations $[g_i, x^{-1} g_i x]$, where $i\in\{1,\cdots,k \}$ and $x\in G$.  
In particular, R$\pi_L$ is the  largest quotient of $\pi_L$ where each meridian commutes with any of its conjugates.
A similar construction as above then yields a map 
  $$ \varphi^{L}_{\textrm{R}}: \mathcal{C}(L)\longrightarrow \textrm{Aut}_\textrm{c}\left(\textrm{R} \pi_L \right). $$
  
\begin{thm}\label{thm:linkhom}
\begin{itemize}
 \item[ ]
 \item[(i)] The map $\varphi^{L}_{\textrm{R}}$ is a link-homotopy invariant. 
 \item[(ii)] 
 If $L$ is slice, then 
 the induced map  
 $\bfrac{\mathcal{C}(L)}{\stackrel{\small{lh}}{\sim}}\rightarrow \textrm{Aut}_\textrm{c}\left(\textrm{R}\pi_L\right)$ is an isomorphism, 
 i.e.  $\varphi^{L}_{\textrm{R}}$ classifies $\mathcal{C}(L)$ up to link-homotopy.
\end{itemize}
\end{thm}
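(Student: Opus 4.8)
The plan is to prove Theorem~\ref{thm:linkhom} in two parts, mirroring the statement. For part (i), I would first carefully set up the map $\varphi^L_{\textrm{R}}$ as a reduced analogue of $\varphi^L_k$: a self-singular concordance $C\in\mathcal{C}^s(L)$ has complement $(B^3\times[0,1])\setminus C$, and I would check that the meridian-conjugating automorphisms of $\pi_L$ induced by the top/bottom inclusions descend to $\textrm{R}\pi_L$, using that $\textrm{R}\pi_L$ is a functorial-enough quotient (each meridian commutes with its conjugates). Then, to see that $\varphi^L_{\textrm{R}}$ is a link-homotopy invariant, I would argue that a surface-link-homotopy between $C$ and $C'$ can be realized by a sequence of ambient isotopies together with self-finger-moves (self-intersections of a single component), and that passing a finger through a sheet of the \emph{same} component changes the complement only by relations of the form $[g_i, x^{-1}g_ix]$ --- which are precisely the relations we have killed in $\textrm{R}\pi_L$. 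The monoid-to-group and well-definedness bookkeeping reduces to Proposition~\ref{thm:groups} and Theorem~\ref{thm:surj}(i) applied after noting concordance implies link-homotopy.

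For part (ii), assume $L$ is slice, so $\pi_L$ has nilpotent quotients isomorphic to those of $F_n$ and hence $\textrm{R}\pi_L\cong \textrm{R}F_n$, the reduced free group, which is a well-understood finitely generated nilpotent-by-(torsion-free) group (Milnor, Habegger--Lin). Surjectivity of $\bfrac{\mathcal{C}(L)}{\stackrel{lh}{\sim}}\to \textrm{Aut}_\textrm{c}(\textrm{R}\pi_L)$ I would deduce from Theorem~\ref{thm:surj}(ii): every conjugating automorphism of $N_k\pi_L$ is realized by an embedded concordance, and since $\textrm{R}\pi_L$ is nilpotent of class $n$ it is a quotient of $N_{n+1}\pi_L$, so the realization at level $k=n+1$ surjects onto $\textrm{Aut}_\textrm{c}(\textrm{R}\pi_L)$ after composing with the natural projection $\textrm{Aut}_\textrm{c}(N_{n+1}\pi_L)\to\textrm{Aut}_\textrm{c}(\textrm{R}\pi_L)$; one must check this projection is itself surjective, which follows from the presentation of $\textrm{R}F_n$.

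The crux is injectivity: if $\varphi^L_{\textrm{R}}(C)=\mathrm{id}$, then $C$ is link-homotopic to the trivial concordance $L\times[0,1]$. Here I would use the slice hypothesis to cap off $C$: gluing slice disks for $L$ at top and bottom turns $C$ into a $2$-sphere (or a collection of closed components) in $S^4$, and the condition $\varphi^L_{\textrm{R}}(C)=\mathrm{id}$ translates into the vanishing of all \emph{reduced} longitudes, i.e. the ``4-dimensional Milnor invariants'' of the title modulo the stated indeterminacy. The argument should then parallel the classical Habegger--Lin classification of link-homotopy for string links: a self-singular concordance is, after cutting along a properly embedded $3$-ball meeting it in the trivial pattern, a kind of ``$2$-dimensional string link,'' and vanishing of its reduced peripheral information forces it to be link-homotopically trivial, built up one component at a time by an innermost-disk / Stallings-type nilpotent induction on $\textrm{R}F_n\cong\prod_i(\text{free reduced factors})$. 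I expect the main obstacle to be making this last step rigorous --- precisely, showing that a self-singular concordance whose reduced longitudes all vanish bounds, componentwise, a link-homotopy to the identity; this requires a clean ``Stallings theorem with self-singularities'' and a careful handle/finger-move calculus in the $4$-manifold $(B^3\times[0,1])\times[0,1]$, and is where the bulk of the technical work lies.
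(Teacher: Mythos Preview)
Your approach to part~(i) differs from the paper's. The paper works diagrammatically: it represents concordances by broken surface diagrams, equips them with R-colorings (surface colorings valued in $\textrm{R}\pi_L$), and shows via Claim~\ref{claim:col} that both the Roseman moves and the self-singular Roseman moves of \cite{AMW} preserve the \emph{terminal coloring}, which by Corollary~\ref{cor:terminal} determines $\varphi^{L,L'}_{\textrm{R}}$. Your finger-move idea captures the same phenomenon---a self-intersection of the $i$th component contributes a relation $[m_i,x^{-1}m_ix]$ already trivial in the reduced group---but making this rigorous requires care you have not supplied: $\varphi^L_{\textrm{R}}$ was defined via Stallings' theorem applied to an \emph{embedded} $C$, and you must either verify that the reduced quotient remains well-defined and unchanged through the intermediate self-singular stages of a link-homotopy, or bypass this issue as the paper does via colorings. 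Your sketch is plausible but not yet a proof.

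For part~(ii) there is a genuine gap. The paper's argument is short: using the bijections of Lemma~\ref{thm:iso}, one builds a commutative square identifying $\varphi^L_{\textrm{R}}$ with $\varphi^{O_n}_{\textrm{R}}$, and then simply quotes \cite{AMW}, where the $O_n$ case (that $\varphi^{O_n}_{\textrm{R}}$ is an isomorphism onto $\textrm{Aut}_\textrm{c}(\textrm{R}F_n)$) is already established. You instead propose to prove injectivity from scratch---cap off with slice disks, translate $\varphi^L_{\textrm{R}}(C)=\textrm{id}$ into vanishing of reduced longitudes, and run a Habegger--Lin-style induction for surfaces. That programme is essentially a rederivation of the main theorem of \cite{AMW}, and you yourself flag that it ``is where the bulk of the technical work lies'' without supplying any of it. So your proposal for~(ii) is not a proof but an outline of a much harder route; the paper avoids all of this by reducing to the known trivial-link case.
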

This generalizes a result obtained in the special case  $L=O_n$ in \cite{AMW,ABMW}. 
Note that, for a slice link $L$, R$\pi_L$ is isomorphic to the reduced free group R$F_n$. 
\medskip

\subsection*{Milnor-type invariants}

Next, we define numerical invariants of embedded concordances, that are modeled on Milnor invariants of links in  $3$-space (reviewed in Section \ref{sec:review}).

Let $C\in \mathcal{C}(L)$.
For each component $C_i$ of $C$, there is a notion of \emph{preferred longitude}, which is represented by an arc running \lq parallel to $C_i$\rq,~ from bottom to top. 
We emphasize however that, unlike in the classical link case, there is not a unique preferred longitude for $C_i$. 
As explained in Section \ref{sec:Milnor}, a preferred longitude can be expressed 
as an element in $N_k \pi_{L}$,  
and taking the \emph{Magnus expansion} of this element yields a formal power series in non commuting variables $X_1,\cdots,X_n$. 
Denote by $\mu^{(4)}_C(i_1\cdots i_{m} i)$ the coefficient of $X_{i_1}\cdots X_{i_{m}}$ in this power series. 
\emph{Milnor $\overline{\mu}^{(4)}$-invariants} of the concordance $C$ are given by the following: 
\begin{thm}\label{thm:invmu}
Let $C\in \mathcal{C}(L)$ be an embedded concordance.
Let $I$ be any sequence of at most $k$ elements in $\{1,\cdots,n\}$. 
The residue class $\overline{\mu}^{(4)}_C(I)$ of $\mu^{(4)}_C(I)$ modulo $\overline{\Delta}_{L}(I)$ 
is an (ambient isotopy relative to the boundary) invariant of $C$.
\end{thm}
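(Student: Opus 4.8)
Fix $C\in\mathcal{C}(L)$; the basepoint and the system of meridians used to define $\mu^{(4)}_C$ are regarded as fixed data attached to $L$, and we write $I=i_1\cdots i_m i$ with $m+1\le k$. By Theorem~\ref{thm:surj}(i), the automorphism $\varphi^{L}_k(C)\in\textrm{Aut}_\textrm{c}(N_k\pi_L)$ is a surface-concordance invariant, hence in particular is unchanged by an ambient isotopy fixing the boundary, and any such isotopy carries a preferred longitude of $C_i$ to a preferred longitude of the image concordance while fixing the chosen meridians. Since $\mu^{(4)}_C(I)$ is by definition the coefficient of $X_{i_1}\cdots X_{i_m}$ in the Magnus expansion of a preferred longitude $\lambda_i\in N_k\pi_L$ of $C_i$, the theorem will follow once we establish: \emph{if $\lambda_i$ and $\lambda_i'$ are two preferred longitudes of $C_i$, then the two resulting values of $\mu^{(4)}_C(I)$ differ by an element of $\overline{\Delta}_L(I)$.}

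The plan for this is to locate the ambiguity in the peripheral data of the $i$th component. A preferred longitude is an arc parallel to the core of $C_i$ running from the peripheral torus of $L_i$ at the bottom to that of $L_i$ at the top, expressed in $N_k\pi_L$ via the isomorphisms of the bottom and top link-complement inclusions (Stallings' theorem). I would show, by a peripheral-subgroup analysis --- using that a tubular neighbourhood of $C_i$ in $B^3\times[0,1]$ is trivial, so that the boundary tube is $T^2\times[0,1]$ whose longitudinal loop is carried to the classical $0$-framed longitude $\ell_i$ of $L_i$ in $\pi_L$ and whose meridional loop is carried to the meridian $m_i$ --- that $\lambda_i'=u\lambda_i$ for some $u\in N_k\pi_L$ which is a product of conjugates of $m_i$ and of $\ell_i$.

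It then remains to run this through the Magnus expansion $E$, a ring homomorphism into the degree-$(<k)$ truncation of the power series ring in $X_1,\dots,X_n$, under which $E(m_i)=1+X_i$ while $E(\ell_i)=1+\sum_j\overline{\mu}_L(ji)X_j+\cdots$, the coefficient of a monomial $X_{j_1}\cdots X_{j_r}$ in $E(\ell_i)$ being the classical Milnor number $\overline{\mu}_L(j_1\cdots j_r\,i)$. One checks that every coefficient of $E(u)-1$ is a $\mathbf{Z}$-linear combination of numbers $\overline{\mu}_L(\beta\,i)$ with $\beta$ a subword of the monomial in question (a property enjoyed by $E(\ell_i)-1$ and preserved under conjugation by, and under products of, power series), the meridian factors contributing only to coefficients of powers of $X_i$. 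Extracting the coefficient of $X_{i_1}\cdots X_{i_m}$ from $E(\lambda_i')=E(u)\,E(\lambda_i)$, the change in $\mu^{(4)}_C(I)$ becomes $\sum_{s=1}^{m}c_s\,\mu^{(4)}_C(i_{s+1}\cdots i_m\,i)$ (with the convention $\mu^{(4)}_C(i)=1$), where each $c_s$ is a $\mathbf{Z}$-combination of Milnor numbers $\overline{\mu}_L(J)$ with $J$ a subsequence of $I$, so $c_s\in\overline{\Delta}_L(I)$ by the definition of the latter. Since $\overline{\Delta}_L(I)$ is a subgroup of $\mathbf{Z}$, it then absorbs each product $c_s\,\mu^{(4)}_C(i_{s+1}\cdots i_m\,i)$, independently of which representative of the lower-length term one chooses. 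The degenerate case $I=(i,\dots,i)$, in which the meridian factors are felt, is handled separately using the indeterminacy for index sequences with repetitions.

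The step I expect to be the main obstacle is the geometric one of the second paragraph: pinning down exactly which elements of $N_k\pi_L$ arise as quotients of two preferred longitudes of a component of a concordance, and verifying that these are precisely the products of conjugates of $m_i$ and $\ell_i$ --- and not the a priori larger centraliser of $m_i$ in $N_k\pi_L$. Once this peripheral description is secured, the rest is bookkeeping with the shuffle structure of the Magnus expansion, which I would organise by induction on $|I|$, using that the lower-length invariants $\overline{\mu}^{(4)}_C$ are already well defined modulo their own indeterminacies and that the latter are contained in $\overline{\Delta}_L(I)$ once multiplied by the $c_s$.
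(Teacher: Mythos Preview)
Your proposal has the right overall shape, but there is a genuine gap and an unnecessary complication that together obscure what is actually going on.

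\medskip
\textbf{The gap: the word-representative ambiguity.} You treat the Magnus expansion as if it were defined on $N_k\pi_L$, writing ``the Magnus expansion of a preferred longitude $\lambda_i\in N_k\pi_L$''. It is not: $E$ is defined on words in the free group $F(m_1,\dots,m_n)$ and factors through $N_kF_n$, but \emph{not} through $N_k\pi_L$. By Milnor's presentation (his Theorem~4), two words in $F(m_j)$ representing the same element of $N_k\pi_L$ differ by elements of $\Gamma_kF(m_j)$ and by conjugates of the peripheral relators $[m_j,\lambda_j]$ for \emph{every} index $j$, not only $j=i$. These relators are not ``products of conjugates of $m_i$ and $\ell_i$'', so your characterisation of the ambiguity $u$ simply misses them. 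Their effect on the Magnus expansion is precisely what produces the $\Delta_L(I)$ part of the indeterminacy, via Milnor's ideal $D_i$ and the cyclic-permutation part of the definition of $\Delta_L$; this is where the paper invokes \cite[(14)]{Milnor2}. Without this step you cannot conclude well-definedness modulo $\overline{\Delta}_L(I)$.

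\medskip
\textbf{The overcomplication: the peripheral analysis.} What you flag as the ``main obstacle'' is in fact straightforward and much sharper than you suggest. The boundary of a tubular neighbourhood of $C_i$ is $T^2\times[0,1]$, so two $i$th longitudes (arcs with the same fixed endpoints) differ by an element of $\pi_1(T^2)\cong\mathbb{Z}^2$; hence in $N_k\pi_L$ they differ by $m_i^{u}\lambda_i^{v}$ for some integers $u,v$ --- no conjugates, no larger centraliser, and nothing to verify beyond this. The paper records this as its equation~(\ref{claim:longitude}). For \emph{preferred} longitudes the linking condition forces $u=0$, so the longitude-choice ambiguity is exactly multiplication by a power of $\lambda_i$. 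Your ``product of conjugates of $m_i$ and $\ell_i$'' is both too weak (it does not pin down the form) and, as noted above, still not general enough to cover the separate word-choice ambiguity. Once one has $l_i'=\lambda_i^{v}l_i$, the Magnus bookkeeping you outline is essentially what the paper does: expanding $E(\lambda_i x)$ shows the change in the coefficient of $X_{i_1}\cdots X_{i_m}$ is a $\mathbb{Z}$-combination of $\mu_L(i_1\cdots i_s\,i)$ for $s\le m$, each of which lies in $\overline{\Delta}_L(I)$ --- the case $s=m$ giving $\mu_L(I)$ is exactly why $\overline{\Delta}_L(I)$ includes $\mu_L(I)$ and not just $\Delta_L(I)$.
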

\noindent Here, the indeterminacy $\overline{\Delta}_L(I)$ is given by Milnor invariants of the \emph{link} $L$ at the boundary. 
This is essentially due to the fact that different choices of preferred longitude for the $i$th component of $C$ differ by 
parallel copies of the $i$th component of the link $L$: via the Magnus expansion, these copies introduce extra terms which are all Milnor numbers of $L$. 
In particular, if $L$ is a slice link, then all Milnor invariants of $L$ vanish and the integer $\mu^{(4)}_C(I)$ is an invariant of $C$; see Remark \ref{rem:ark}. 
This interplay between $3$-dimensional and $4$-dimensional invariants seems quite remarkable.
%\medskip

Properties of these 
$\overline{\mu}^{(4)}$-invariants of embedded concordances are investigated. 
In particular, we have: 
\begin{thm}\label{thm:mu}
\begin{enumerate}
\item[ ]
\item[(i)] Milnor $\overline{\mu}^{(4)}$-invariants are surface-concordance invariants. 
\item[(ii)] Milnor $\overline{\mu}^{(4)}$-invariants indexed by non-repeated sequences are link-homotopy invariants. 
\item[(iii)] If $L$ is slice, Milnor 
$\mu^{(4)}$-invariants indexed by non-repeated sequences form a complete link-homotopy invariant for $\mathcal{C}(L)$. 
In other words, two embedded concordances from $L$ to itself are link-homotopic if and only if 
all their Milnor $\mu^{(4)}$-invariants indexed by non-repeated sequences coincide. 
\end{enumerate}
\end{thm}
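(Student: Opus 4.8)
The plan is to derive Theorem~\ref{thm:mu} from the Artin-type statements (Theorems~\ref{thm:surj} and~\ref{thm:linkhom}) by first making precise the idea that the $\overline{\mu}^{(4)}$-invariants are merely a readout of the maps $\varphi^{L}_k$ and $\varphi^{L}_{\textrm{R}}$. The dictionary I would establish is the following: by the construction of $\varphi^L_k$ in Section~\ref{sec:phik}, the automorphism $\varphi^L_k(C)\in\textrm{Aut}_\textrm{c}(N_k\pi_L)$ carries the $i$th meridian $m_i$ to $\ell_i^{-1}m_i\ell_i$, where $\ell_i\in N_k\pi_L$ is the class of a preferred longitude of the component $C_i$; the residual ambiguity in $\ell_i$ is exactly the indeterminacy in the choice of preferred longitude, which, as explained just before Theorem~\ref{thm:invmu}, is generated by parallel copies of the $i$th component of $L$ and produces precisely $\overline{\Delta}_L(I)$ after applying the Magnus expansion. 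Hence for every sequence $I$ of length $\le k$ the residue class $\overline{\mu}^{(4)}_C(I)$ is a function of $\varphi^L_k(C)$ alone; likewise $\varphi^L_{\textrm{R}}(C)$ carries $m_i$ to $\ell_i^{-1}m_i\ell_i$ in $\textrm{R}\pi_L$, and for a \emph{non-repeated} sequence $I$ the coefficient $\overline{\mu}^{(4)}_C(I)$ only involves the image of the longitude in the reduced quotient, so it is a function of $\varphi^L_{\textrm{R}}(C)$ alone (this last point is the surface analogue of the classical fact that non-repeated Milnor numbers of a link factor through its reduced group).

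Granting this dictionary, (i) is immediate from Theorem~\ref{thm:surj}(i), since $\varphi^L_k$ is a surface-concordance invariant, and (ii) is immediate from Theorem~\ref{thm:linkhom}(i), since $\varphi^L_{\textrm{R}}$ is a link-homotopy invariant.

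For (iii), assume $L$ is slice, so that every Milnor invariant of $L$ vanishes: then $\overline{\Delta}_L(I)=0$ for all $I$, each $\mu^{(4)}_C(I)$ is a genuine integer invariant of $C$ (Remark~\ref{rem:ark}), and $\textrm{R}\pi_L\cong\textrm{R}F_n$. By Theorem~\ref{thm:linkhom}(ii), two embedded concordances $C,C'\in\mathcal{C}(L)$ are link-homotopic if and only if $\varphi^L_{\textrm{R}}(C)=\varphi^L_{\textrm{R}}(C')$ in $\textrm{Aut}_\textrm{c}(\textrm{R}F_n)$. It thus remains to prove the purely algebraic statement that such an automorphism is determined by the integers $\mu^{(4)}(Ii)$ ranging over all non-repeated sequences $Ii$. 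An element of $\textrm{Aut}_\textrm{c}(\textrm{R}F_n)$ is encoded by the tuple $(\ell_1,\dots,\ell_n)$ of its conjugating elements, each $\ell_i$ being well-defined modulo the centralizer of $x_i$ in $\textrm{R}F_n$, which contains the normal closure $\langle\langle x_i\rangle\rangle$ by the defining relations of the reduced free group. I would then invoke Milnor's structure theory for $\textrm{R}F_n$ (and its use by Habegger--Lin in the classification of string links up to link-homotopy): the reduced Magnus expansion is injective on $\textrm{R}F_n/\langle\langle x_i\rangle\rangle$, and the coordinates of $\ell_i$ in it are exactly the coefficients on non-repeated words in the variables $X_j$ with $j\neq i$, i.e.\ the numbers $\mu^{(4)}(Ii)$ with $Ii$ non-repeated. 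Combined with the forward implication of (iii) — which is the slice case of (ii) — this yields the stated classification.

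I expect the main obstacle to be this algebraic lemma in (iii): one must check with care that the indeterminacy attached to the choice of preferred longitude annihilates exactly the Magnus coefficients indexed by repeated sequences and nothing more, so that the remaining data is recorded faithfully by the non-repeated $\mu^{(4)}$-invariants; this involves bookkeeping of the shuffle relations among Milnor numbers together with the nilpotency and explicit presentation of $\textrm{R}F_n$. None of this should present an essentially new difficulty, as it is closely modelled on the classical string-link situation and on the case $L=O_n$ already treated in \cite{AMW,ABMW}.
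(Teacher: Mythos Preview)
Your strategy for (i) and (iii) matches the paper's. For (i), the paper argues just as you do, deducing invariance from the commutative diagram~(\ref{diagbis}) underlying $\varphi^L_k$ by transporting a preferred longitude along a surface-concordance. For (iii), the paper proves precisely your algebraic lemma as Lemma~\ref{lem:phimu}: for slice $L$, the equality $\varphi^{L,L'}_{\textrm{R}}(C)=\varphi^{L,L'}_{\textrm{R}}(C')$ is equivalent to agreement of all non-repeated $\mu^{(4)}$-invariants, using that an element of $\textrm{R}F_n$ is trivial iff its Magnus expansion is $1+(\textrm{repeated terms})$.

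Part (ii) is where your route diverges from the paper, and there is a real gap. You want to deduce (ii) from the link-homotopy invariance of $\varphi^L_{\textrm{R}}$ (Theorem~\ref{thm:linkhom}(i)) via the claim that non-repeated $\overline{\mu}^{(4)}$-invariants are a function of $\varphi^L_{\textrm{R}}(C)$ alone. But $\varphi^L_{\textrm{R}}(C)$ only determines the longitude $\ell_i\in\textrm{R}\pi_L$ modulo the centralizer of $m_i$, and for a general (non-slice) $L$ that centralizer can be strictly larger than the normal closure of $m_i$: already for the Hopf link, $\textrm{R}\pi_L$ is abelian and $m_2$ centralizes $m_1$. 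Your discussion of the centralizer is explicitly confined to $\textrm{R}F_n$ and to part (iii); in the general case you would still need to show that every centralizer element has non-repeated Magnus coefficients (in the variables $X_j$ with $j\neq i$) controlled by $\overline{\Delta}_L$, and this is neither obvious nor the content of Lemma~\ref{lem:phimu}. The paper sidesteps this by \emph{not} factoring through $\varphi^L_{\textrm{R}}$: it reruns the $R$-coloring and Roseman-move argument directly on longitudes and extracts a statement sharper than ``$\varphi^L_{\textrm{R}}$ is preserved'', namely that under a link-homotopy the $i$th preferred longitude in $\textrm{R}\pi_L$ changes by a product of conjugates of $m_i$ (the Claim inside the proof of Theorem~\ref{thm:mu}(ii)). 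That geometric conclusion is exactly what makes the Magnus-expansion bookkeeping go through modulo $\overline{\Delta}_L$ for arbitrary $L$.
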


It seems highly unlikely that Theorem \ref{thm:mu}~(iii) holds for an arbitrary link. For instance, if $L$ is a positive Hopf link, 
then elements of $\mathcal{C}(L)$ are undistinguisable by $\overline{\mu}^{(4)}$-invariants, since $\overline{\Delta}_{L}(I)=1$ for any sequence $I$ with at least $2$ indices. 
\medskip

Some applications of these numerical invariants are given in Section \ref{sec:appli}, involving simple examples of explicit computations. 
In particular, it is shown that for a slice link $L$, the groups $\bfrac{\mathcal{C}(L)}{\stackrel{c}{\sim}}$ and   
  $\bfrac{\mathcal{C}(L)}{\stackrel{\small{lh}}{\sim}}$ are not abelian, except for very small numbers of components (Prop.~\ref{proposition1} and \ref{proposition2}). 
  Since $\mathcal{C}(L)\subset \mathcal{C}^s(L)$, this implies that $\bfrac{\mathcal{C}^s(L)}{\stackrel{\small{lh}}{\sim}}$ is not abelian either. 
  \\
  More can actually be said about the group $\bfrac{\mathcal{C}(L)}{\stackrel{\small{lh}}{\sim}}$ in the slice case. 
  Indeed by Theorem \ref{thm:linkhom} this group coincides with the group Aut$_c(\textrm{R}F_n)$, also known as the \emph{reduced McCool group}, 
  or group of \emph{welded string links up to homotopy} studied in \cite{ABMW,AMW,D}. This is a nilpotent group (of class $n-1$ for an $n$ component link $L$), 
  whose Hirsch rank is given in \cite[Rem~4.9]{AMW} (see also \cite[Cor.~2.11]{D}); 
  a (finite) presentation for this group was recently computed in \cite[Thm.~5.8]{D}.  
  We expect that the tools of the present paper might lead to a better understanding of these groups in the general case.  

\begin{acknowledgments}
The authors are indebted to Benjamin Audoux for stimulating discussions in the course of the preparation of this paper, and in particular 
for pointing out that the notion of R-coloring can be used to prove the link-homotopy invariance result. They also thank Jacques Darn\'e, Mark Powell and the referee for insightful comments. \\
The second author was supported by JSPS KAKENHI Grant Number JP21K03237 and a Waseda University Grant for Special Research Projects (Project number: 2020R-1018). This work was also partly supported by the project AlMaRe ANR-19-CE40-0001-01.
\end{acknowledgments}

\section{Groups of concordance and link-homotopy classes}

In this short section, we prove a slightly generalized version of Proposition \ref{thm:groups}. 

The following  readily implies Proposition \ref{thm:groups}~(i). 
More generally this lemma endows (self-singular) concordances up to link-homotopy with a structure of groupoid: the category with objects given by the set of all links of $B^3$ and with morphisms given by the (possibly empty) set of link-homotopy classes of (self-singular) concordances between two given links, has the property that every morphism is invertible. 
\begin{lemma}\label{lem:inverse}
For a (self-singular) concordance $X$, we denote by $\overline{X}$ its image by the involution of $B^3\times [0,1]$ mapping $B^3\times \{t\}$ to $B^3\times \{(1-t)\}$ for all $t$.
  \begin{enumerate}
  \item For any concordance $C$ in $\mathcal{C}(L,L')$, we have 
    $$ C\cdot \overline{C}\stackrel{c}{\sim} L\times [0,1]\quad\textrm{and}\quad \overline{C}\cdot C\stackrel{c}{\sim} L'\times [0,1]. $$
  \item For any self-singular concordance (and, in particular, any concordance) $S$ in $\mathcal{C}^s(L,L')$, we have
    $$ S\cdot \overline{S}\stackrel{lh}{\sim} L\times [0,1]\quad\textrm{and}\quad \overline{S}\cdot S\stackrel{lh}{\sim} L'\times [0,1]. $$
  \end{enumerate}
\end{lemma}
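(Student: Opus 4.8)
The plan is to prove both parts by constructing explicit isotopies (resp.\ homotopies) realizing the standard ``finger'' move that cancels a concordance against its mirror. The statement is the surface analogue of the classical fact that, for a string link $S$ in $B^3\times[0,1]$, the product $S\cdot\overline{S}$ is isotopic rel boundary to the trivial string link; the proof follows the same idea, one dimension up.

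First I would set up the model. Recall that $C\cdot\overline{C}$ is obtained by stacking $C$ in $(B^3\times[0,1])\times[0,\tfrac12]$ with its mirror image $\overline{C}$ in $(B^3\times[0,1])\times[\tfrac12,1]$, where the second coordinate $[0,1]$ (the one being halved) is the new concordance-of-concordances parameter. Near the middle level $t=\tfrac12$, the surface $C\cdot\overline{C}$ looks, component by component, like the annulus $C_i$ pushed up and then immediately pulled back down: concretely, for each $i$ the $i$th component is the ``double'' of the annulus $C_i$, i.e.\ two parallel copies of $C_i$ glued along a copy of $L'_i\times\{1\}$ at the top (and attached to $L_i\times\{0\}$ at the bottom). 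The key geometric input is that such a doubled annulus bounds, rel its boundary, a solid region in one higher dimension that is a product: namely $C_i\times[0,1]$ realizes exactly a surface-concordance from $C_i\cdot\overline{C_i}$ to $L_i\times[0,1]$, with the product condition on the side boundaries $(L\times\{0\})\times[0,1]$ and $(L'\times\{1\})\times[0,1]$ being automatic because those pieces are themselves products. Doing this simultaneously for all $i$ keeps distinct components disjoint, since the $C_i$ are disjoint to begin with; this gives the embedding $W$ required in the definition of $\stackrel{c}{\sim}$, proving part (1). The identity $\overline{C}\cdot C\stackrel{c}{\sim}L'\times[0,1]$ follows by the symmetric argument (or by applying the involution $\overline{\,\cdot\,}$ to the first identity, noting $\overline{\overline{C}}=C$).

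For part (2), the construction is literally the same, except that $S$ is only a self-singular concordance, so $S_i$ may have self-intersections. Replaying the argument, $S_i\times[0,1]$ now gives a \emph{homotopy} (not an isotopy) from $S_i\cdot\overline{S_i}$ to $S_i\times[0,1]$... wait --- more precisely, it gives a homotopy from $S_i\cdot\overline{S_i}$ to $L_i\times[0,1]$ through self-singular concordances: at each time the $i$th component stays immersed with self-intersections only (those inherited from $S_i$), and distinct components stay disjoint. This is exactly a surface-link-homotopy, so the identities of part (2) follow, again using the involution for the second equality.

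The main obstacle --- such as it is --- is purely bookkeeping: one must check carefully that the ``finger move'' retraction $C_i\cdot\overline{C_i}\rightsquigarrow L_i\times[0,1]$ can be carried out with its trace genuinely an \emph{embedded} (resp.\ immersed-with-self-intersections-only) submanifold of $(B^3\times[0,1])\times[0,1]$, and that it has the prescribed product behaviour on the relevant parts of the boundary. The cleanest way to organize this is to observe that $\mathcal{C}(L,L')$ and $\mathcal{C}^s(L,L')$ are themselves sets of string-link-like objects in the $4$-manifold $B^3\times[0,1]$ (with boundary the fixed links $L$ and $L'$), and then invoke the standard proof that a string link times its mirror is trivial --- the only point needing attention is that the ambient ``time'' direction used for that cancellation is the extra $[0,1]$ factor, and that in the self-singular case the cancellation move never forces an intersection between distinct components. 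I expect no real difficulty beyond making these checks precise; this is why the excerpt describes the result as ``mainly a combination of known facts and techniques.''
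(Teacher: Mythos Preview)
Your argument for part (1) is essentially identical to the paper's: both take $C\times[0,1]\subset (B^3\times[0,1])\times[0,1]$, observe that its boundary decomposes as $C\cup(L\times[0,1])\cup\overline{C}\cup(L'\times[0,1])$, and contract the $L'\times[0,1]$ piece to obtain a surface-concordance from $C\cdot\overline{C}$ to $L\times[0,1]$. (Your opening description of the stacking is slightly garbled --- the halving takes place in the $B^3\times[0,1]$ direction, not in the extra concordance parameter --- but you arrive at the right construction.)

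For part (2) your route genuinely differs from the paper's. The paper repeats the $S\times[0,1]$ construction, which for self-singular $S$ only produces a \emph{self-singular} surface-concordance from $S\cdot\overline{S}$ to $L\times[0,1]$, and then appeals to a theorem of Bartels--Teichner (``(self-singular) concordance implies link-homotopy'' for surface-links) to conclude. You instead argue directly: the finger-move retraction $s\mapsto\big(S\vert_{B^3\times[0,1-s]}\big)\cdot\overline{\big(S\vert_{B^3\times[0,1-s]}\big)}$ (suitably reparametrized) is visibly a continuous path in $\mathcal{C}^s(L,L)$, since at each time the components are restrictions of the original $S_i$ doubled, hence remain pairwise disjoint with only self-intersections. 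This is correct and more elementary --- it avoids the external citation entirely --- while the paper's version is shorter to write and offloads the verification to the literature.
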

\begin{proof}
We first give the argument for (1) -- which is essentially the same proof as for the analoguous fact one dimension down. Given a (self-singular) 
concordance $C$ from $L$ to $L'$, consider the product $C\times [0,1]$ in $(B^3\times [0,1])\times [0,1]\simeq B^5$. Then the  boundary of $C\times [0,1]$ is 
$C\cup (L\times [0,1])\cup \overline{C}\cup (L'\times [0,1])$.  Contracting $L'\times [0,1]$ to $L'\times \{0\}$, we thus obtain an explicit concordance from $C\cdot \overline{C}$ to $L\times [0,1]$. 
\\
Then (2) follows, by a general result of Bartels and Teichner \cite[Thm.~5]{BT}, 
stating that \lq (self-singular) concordance implies link-homotopy\rq~ for surface-links
\end{proof}
\medskip

Next, we introduce two maps, defined by stacking operations, and which encode the interplay between the sets 
of (self-singular) concordances when we let the boundary link vary. 
\begin{definition}\label{def:maps}
Let $X\in \mathcal{C}^s(L,L')$ be a self-singular concordance. 
We define two maps 
\begin{eqnarray*}
 \xi_X:  \mathcal{C}^s(L')\longrightarrow \mathcal{C}^s(L) \,& ; & \,C'\mapsto X\cdot C'\cdot \overline{X}
 \end{eqnarray*}
and 
\begin{eqnarray*}
\eta_X: \mathcal{C}^s(L)\longrightarrow \mathcal{C}^s(L,L') \, & ; &   \, C\mapsto C\cdot X.
\end{eqnarray*}
\end{definition}

We have the following, which in particular implies Proposition \ref{thm:groups}~(ii) and (iii).  
\begin{lemma}\label{thm:iso}
Let $X\in \mathcal{C}^s(L,L')$ be a self-singular concordance. 
The maps $\xi_X$ and $\eta_X$ descend, respectively, to a group isomorphism and a bijection 
$$  
\xi^{lh}_X:  \bfrac{\mathcal{C}^s(L')}{\stackrel{lh}{\sim}}\longrightarrow \bfrac{\mathcal{C}^s(L)}{\stackrel{lh}{\sim}} 
\,\, \textrm{ and }\,\,  
\eta^{lh}_X: \bfrac{\mathcal{C}^s(L)}{\stackrel{lh}{\sim}}\longrightarrow \bfrac{\mathcal{C}^s(L,L')}{\stackrel{lh}{\sim}}. 
$$
Moreover, if $X$ is an embedded concordance, i.e. $X\in \mathcal{C}(L,L')$, we obtain likewise 
group isomorphisms 
$$\xi^{c}_X:  \bfrac{\mathcal{C}(L')}{\stackrel{c}{\sim}}\longrightarrow \bfrac{\mathcal{C}(L)}{\stackrel{c}{\sim}} 
\,\, \textrm{ and }\,\,  
 \xi^{lh}_X:  \bfrac{\mathcal{C}(L')}{\stackrel{lh}{\sim}}\longrightarrow \bfrac{\mathcal{C}(L)}{\stackrel{lh}{\sim}}, $$ 
and bijections 
$$\eta^{c}_X: \bfrac{\mathcal{C}(L)}{\stackrel{c}{\sim}}\longrightarrow \bfrac{\mathcal{C}(L,L')}{\stackrel{c}{\sim}} 
\,\, \textrm{ and }\,\,  
 \eta^{lh}_X: \bfrac{\mathcal{C}(L)}{\stackrel{lh}{\sim}}\longrightarrow \bfrac{\mathcal{C}(L,L')}{\stackrel{lh}{\sim}}. $$
\end{lemma}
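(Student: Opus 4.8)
The plan is to verify that $\xi_X$ and $\eta_X$ descend to well-defined maps on the quotient sets, that the resulting maps are mutually inverse (in the appropriate sense), and that $\xi_X^{lh}$ is moreover a homomorphism. First I would observe that both $\xi_X$ and $\eta_X$ are defined purely by stacking with the fixed (self-singular) concordance $X$ (or with $X$ and $\overline{X}$), so they are compatible with link-homotopies (respectively surface-concordances) performed in the interior of the relevant factor: a link-homotopy between $C'$ and $C''$ in $\mathcal{C}^s(L')$ stacks with $X$ on the bottom and $\overline{X}$ on the top to give a link-homotopy between $X\cdot C'\cdot\overline{X}$ and $X\cdot C''\cdot\overline{X}$, and similarly for $\eta_X$. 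This gives well-definedness of $\xi_X^{lh}$, $\eta_X^{lh}$, $\xi_X^c$, etc., on the nose; the point to be a little careful about is that the deformation takes place rel boundary, but since we are stacking below/above and the boundary links $L$, $L'$ are fixed, there is no issue.

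Next I would check that $\xi_X^{lh}$ is a group homomorphism. For $C_1', C_2'\in\mathcal{C}^s(L')$ we have
$$ \xi_X(C_1')\cdot\xi_X(C_2') = X\cdot C_1'\cdot\overline{X}\cdot X\cdot C_2'\cdot\overline{X}, $$
and by Lemma \ref{lem:inverse}(2) the middle factor $\overline{X}\cdot X$ is link-homotopic (rel boundary) to $L'\times[0,1]$, which is the identity element; inserting this link-homotopy shows $\xi_X(C_1')\cdot\xi_X(C_2')\stackrel{lh}{\sim}\xi_X(C_1'\cdot C_2')$, so $\xi_X^{lh}$ respects the stacking product. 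The same computation with $\stackrel{c}{\sim}$ and Lemma \ref{lem:inverse}(1) handles $\xi_X^c$ when $X$ is an embedded concordance. By contrast $\eta_X^{lh}$ (and $\eta_X^c$) cannot be a homomorphism in general since its source and target are different monoids, or rather the target $\mathcal{C}(L,L')$ has no natural monoid structure; we only claim it is a bijection, which is consistent.

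Then I would produce the inverse maps. The natural candidate for the inverse of $\xi_X^{lh}$ is $\xi_{\overline{X}}^{lh}$ (note $\overline{X}\in\mathcal{C}^s(L',L)$), and for the inverse of $\eta_X^{lh}$ the map $C\mapsto C\cdot\overline{X}$ from $\mathcal{C}^s(L,L')$ to $\mathcal{C}^s(L)$ — more precisely its descent to quotients. The composites $\xi_X\circ\xi_{\overline{X}}$ and $\xi_{\overline{X}}\circ\xi_X$ send $C$ to $X\cdot\overline{X}\cdot C\cdot X\cdot\overline{X}$ (up to reordering), and again two applications of Lemma \ref{lem:inverse} collapse the outer $X\cdot\overline{X}$ and $\overline{X}\cdot X$ factors to identity concordances, showing these composites are the identity on the respective quotients. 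The composites involving $\eta_X$ are handled the same way: $(C\cdot X)\cdot\overline{X}\stackrel{lh}{\sim}C$ and $(D\cdot\overline{X})\cdot X\stackrel{lh}{\sim}D$ for $D\in\mathcal{C}^s(L,L')$, using $X\cdot\overline{X}\stackrel{lh}{\sim}L\times[0,1]$ and $\overline{X}\cdot X\stackrel{lh}{\sim}L'\times[0,1]$. Finally, restricting all of the above to embedded concordances and replacing $\stackrel{lh}{\sim}$ by $\stackrel{c}{\sim}$ where Lemma \ref{lem:inverse}(1) applies gives the last batch of statements; one just notes that $\xi_X$ and $\eta_X$ preserve the property of being embedded (stacking embedded concordances yields an embedded concordance), so these restrictions make sense.

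The only genuinely delicate point — and thus the main obstacle — is the bookkeeping around the product being associative and having $L\times[0,1]$ (resp.\ $L'\times[0,1]$) as a strict versus up-to-equivalence identity: one must make sure that when we "insert" the link-homotopy of Lemma \ref{lem:inverse} into the middle of a stacked product, this is legitimate, i.e.\ that a link-homotopy between $\overline{X}\cdot X$ and $L'\times[0,1]$ performed inside a larger stacked concordance yields a valid link-homotopy of the whole, rel the outer boundary. This is where the fact that $\stackrel{lh}{\sim}$ and $\stackrel{c}{\sim}$ are defined by deformations rel boundary, combined with the locality of the stacking construction, does the work; I would state this compatibility explicitly (stacking is "functorial" with respect to both equivalence relations) and then the rest is the short algebra above. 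Everything else is routine.
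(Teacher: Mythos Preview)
Your proposal is correct and follows essentially the same approach as the paper: both arguments reduce everything to Lemma~\ref{lem:inverse}, using $\overline{X}\cdot X\sim L'\times[0,1]$ to show $\xi_X$ is a homomorphism and taking $\xi_{\overline{X}}$ (resp.\ right-stacking by $\overline{X}$) as the inverse of $\xi_X$ (resp.\ $\eta_X$). Your write-up is in fact more thorough than the paper's, which only sketches the case of $\xi^c_X$ and leaves well-definedness and the $\eta$ maps implicit.
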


\begin{proof}
This is a rather immediate consequence of Lemma \ref{lem:inverse}. 
Let us briefly outline the proof for, say, the map $\xi^{c}_X$ -- the argument is elementary, and is strictly similar for the other maps.
(We note that similar arguments appear in \cite[\S 2.5 and 2.6]{tuncador}.)  
By Lemma \ref{lem:inverse}, we have that 
$$ \xi_X(C\cdot C') = \left(X\cdot C\right)\cdot \left(C'\cdot \overline{X}\right) \stackrel{c}{\sim} \left(X\cdot C\cdot \overline{X}\right)\cdot \left(X\cdot C'\cdot \overline{X}\right) = \xi_X(C)\cdot \xi_X(C'), $$
for any $C,C'\in  \mathcal{C}(L')$. Hence $\xi^{c}_X$ is a homomorphism. 
Lemma \ref{lem:inverse} also shows that the maps $\xi^{c}_{X}$ and $\xi^{c}_{\overline{X}}$ satisfy  $\xi^{c}_{X}\circ \xi^{c}_{\overline{X}} = \textrm{Id}_{\sfrac{\mathcal{C}(L)}{\stackrel{c}{\sim}}}$ and  
$\xi^{c}_{\overline{X}}\circ \xi^{c}_{X} = \textrm{Id}_{\sfrac{\mathcal{C}(L')}{\stackrel{c}{\sim}}}$, 
which proves that $\xi^{c}_X$ is bijective.  
\end{proof}

\section{Artin-type invariants}\label{sec:artinsec}

As before, let $L$ and $L'$ be two ordered, oriented $n$-component links in the interior of the $3$-ball $B^3$. 
Fix a point $p$ in $\partial B^3$.

\subsection{Artin-type invariants of concordances}\label{sec:phik}

Let $C\in \mathcal{C}(L,L')$. 
Denote by $\pi_C$ the fundamental group of $(B^3\times [0,1])\setminus C$ based at $p\times \{0\}$. 
Denote also by $\pi_L$, resp. $\pi_{L'}$, the fundamental group of $B^3\setminus L$, resp $B^3\setminus L'$, based at $p$. 

We denote by $\iota_L$, resp. $\iota_{L'}$, the inclusion map of $B^3\setminus L$, resp $B^3\setminus L'$, in 
the complement $(B^3\times [0,1])\setminus C$.  

\subsubsection{Action on nilpotent quotients}\label{sec:artin}

Recall from the introduction that $N_k G$ denotes the $k$th nilpotent quotient of a group $G$. 
By a theorem of Stallings \cite[Thm.~5.1]{Stallings}, the inclusion maps $\iota_L$ and $\iota_{L'}$ induce isomorphisms 
\begin{equation}\label{eq:Nk}
 N_k \pi_L \stackrel{\iota_L^k}{\longrightarrow} N_k \pi_C \stackrel{\iota_{L'}^k}{\longleftarrow} N_k \pi_{L'}, 
\end{equation}
for all integers $k\ge 1$.
Hence, 
 $$ \varphi_{C,k}:= \left(\iota_{L}^k\right)^{-1}\circ \iota_{L'}^k\in \textrm{Iso}\left(N_k \pi_{L'},N_k \pi_{L} \right) $$
is an invariant of (ambiant isotopy relative to the boundary of) $C$.

\begin{definition}\label{def:phik}
 For all $k\ge 1$, we denote by
  $$ \varphi^{L,L'}_k: \mathcal{C}(L,L')\longrightarrow \textrm{Iso}\left(N_k \pi_{L'},N_k \pi_{L} \right)$$
  the map defined by $ \varphi^{L,L'}_k(C) := \varphi_{C,k}$. 
\end{definition}
\noindent (Note that, as claimed above, this map actually factors through the monoid of equivalence classes of $\mathcal{C}(L)$. )

\subsubsection{Meridians and longitudes}\label{sec:longitude}

The map $\varphi^{L,L'}_k$ of Definition \ref{def:phik} can be described somewhat more precisely. 

Recall  from \cite{Chen} that the nilpotent quotient $N_k \pi_L$, resp. $N_k \pi_{L'}$, is generated by $n$ elements, which are given by the \emph{choice} of a meridian for each component of $L$, resp. $L'$.
Although the main definitions and most results of this paper hold for a general notion of meridian, we will restrict ourselves to the following definition to avoid technical issues.\footnote{Specifically, Definition \ref{def:merid} is mainly only necessary in Sections \ref{sec:lhcol} and \ref{sec:lhcol2}. }

For each $i$, pick a fixed point $p_{i,0}$, resp. $p_{i,1}$, near the $i$th component $L_i$ of $L$, resp. $L'_i$ of $L$. 
Denote by $v_p$ the unit normal vector at the basepoint $p\in S^2=\partial B^3$. 
\begin{definition}\label{def:merid}
A \emph{meridian} for the $i$th component $L_i$ of $L$, resp. $L'_i$ of $L'$, 
is the union of 
 a small loop based at $p_{i,0}$, resp $p_{i,1}$, enlacing this component positively 
 and an arc that first runs from $p_{i,0}$, resp $p_{i,1}$, to $S^2=\partial B^3$ following the $v_p$ direction, then to the basepoint $p$ following a geodesic arc in $S^2$. 
A \emph{system of meridians} for $L$, resp. $L'$ is a union of meridians, one for each component of $L$, resp. $L'$. Up to a small perturbation of the basepoint, we may assume that these meridians are disjoint from $L$, resp. $L'$, and are mutually disjoint except at $p$. 
A \emph{basing} for $C\in \mathcal{C}(L,L')$ is the choice of a system of meridians for both $L$ and $L'$. 
\end{definition}

Suppose for now that elements of $\mathcal{C}(L,L')$ are equipped with a common fixed basing  
(and in particular with fixed points $p_{i,0}$ and $p_{i,1}$). 
Denote respectively by $\{m_1,\cdots,m_n\}$ and $\{m'_1,\cdots,m'_n\}$ the generating set for 
$N_k \pi_L$ and $N_k \pi_{L'}$ given by such a basing. 
Then $\varphi_{C,k}$ sends each $m'_i$ to a conjugate of $m_i$; $1\le i\le n$. In other words, there exists elements 
$l_1,\cdots,l_n$ in $N_k \pi_{L}$ such that 
\begin{equation}\label{eq:conjug}
 \varphi_{C,k}(m'_i) = l_i^{-1} m_i l_i \textrm{; $1\le i\le n$.} 
\end{equation}
\noindent 
This conjugating element $l_i\in N_k \pi_{L}$ is the image by $\left(\iota_{L}^k\right)^{-1}$ of an element of 
$ N_k \pi_{C}$ representing an $i$th longitude for $C$, which is defined as follows: 
\begin{definition}\label{def:long}
Let $C\in \mathcal{C}(L,L')$  be equipped with a basing. 
An \emph{$i$th longitude} of $C$ is an arc embedded in the boundary of a regular neighborhood of the $i$th component of $C$ and running from the fixed point $p_{i,0}$ to the fixed point $p_{i,1}$. 
Such an arc can be canonically closed into a loop using the arcs from $p_{i,0}$ to $p\times \{0\}$ and from $p_{i,1}$ to $p\times \{1\}$ given by the basing, and the arc $p\times [0,1]$. 
\end{definition}
Note that $m_i$ and $m'_i$ are conjugate by any choice of $i$th longitude for $C$ -- 
see Section \ref{sec:4dimM} for more on longitudes.
\medskip 

Now, it is not difficult to check that an element of Iso$\left(N_k \pi_{L'} ,N_k \pi_{L} \right)$ which \lq acts by conjugation\rq~ in the sense of Equation (\ref{eq:conjug}) for a given basing, also acts by conjugation on any other choice of basing. 
Hence it  make sense to define the subset Iso$_\textrm{c}\left(N_k \pi_{L'} ,N_k \pi_{L} \right)$ of 
Iso$\left(N_k \pi_{L'} ,N_k \pi_{L} \right)$ of elements acting by conjugation on any choice of basing for $C$. 

The above discussion shows that we actually have, for all $k\ge 1$, a map
  $$ \varphi^{L,L'}_k: \mathcal{C}(L,L')\longrightarrow \textrm{Iso}_\textrm{c}\left(N_k \pi_{L'} ,N_k \pi_{L} \right).$$

Notice that, in the case $L=L'$, the map $\varphi^{L}_k:=\varphi^{L,L}_k$ thus takes its values in the subgroup Aut$_\textrm{c}\left(N_k \pi_L\right)$ of automorphisms of $N_k \pi_L$ acting by conjugation on each generator, for \emph{any} choice of system of meridians for $L$. 

\subsubsection{Action on reduced groups}\label{sec:reduced} 

Recall that R$\pi_L$ is the largest quotient group of $\pi_L$ where each meridian  $m_i$ commutes with any of its conjugates. 

By  \cite[Lem.~5]{Milnor}, the reduced groups R$\pi_L$ and R$\pi_{L'}$ are nilpotent groups (of order at most $n$). 
This and (\ref{eq:Nk}) show that the inclusion maps $\iota_L$ and $\iota_{L'}$ also induce isomorphisms
\begin{equation}\label{eq:R}
 \textrm{R}\pi_L \stackrel{\iota_L^R}{\longrightarrow}  \textrm{R}\pi_C \stackrel{\iota_{L'}^R}{\longleftarrow} \textrm{R}\pi_{L'}. 
\end{equation}
Hence a group isomorphism 
 $$ \varphi_{C,R}:= \left(\iota_{L}^R\right)^{-1}\circ \iota_{L'}^R\in \textrm{Iso}\left(\textrm{R} \pi_{L'} ,\textrm{R} \pi_{L} \right). $$
\begin{definition}\label{def:phiR}
 We denote by
  $$ \varphi^{L,L'}_{\textrm{R}}: \mathcal{C}(L,L')\longrightarrow \textrm{Iso}\left(\textrm{R} \pi_{L'} ,\textrm{R} \pi_{L} \right)$$
  the map defined by $ \varphi^{L,L'}_{\textrm{R}}(C) := \varphi_{C,R}$. 
\end{definition}
\noindent As in Section \ref{sec:longitude}, this map acts by conjugation, 
that is, we actually have a map 
  $$ \varphi^{L,L'}_{\textrm{R}}: \mathcal{C}(L,L')\longrightarrow \textrm{Iso}_\textrm{c}\left(\textrm{R} \pi_{L'} ,\textrm{R} \pi_{L} \right), $$
where $\textrm{Iso}_\textrm{c}\left(\textrm{R} \pi_{L'} ,\textrm{R} \pi_{L} \right)$ denotes 
the subset of $\textrm{Iso}\left(\textrm{R} \pi_{L'} ,\textrm{R} \pi_{L} \right)$ of elements satisfying a conjugation formula of the type of (\ref{eq:conjug}) for any choice of basing for $C$. 

In the case $L=L'$, we get in this way a map 
 $$\varphi^L_{\textrm{R}}:=\varphi^{L,L}_{\textrm{R}}: \mathcal{C}(L)\rightarrow \textrm{Aut}_\textrm{c}\left( \textrm{R}\pi_L \right), $$
where, as above, the subscript c stands for automorphisms acting by conjugation on any choice of system of meridians for $L$. 

\subsection{Topological properties and classification results}

In the rest of this section, we investigate some of the properties of the invariants defined above. 
In particular, we prove slightly more general versions of Theorems \ref{thm:surj} and \ref{thm:linkhom} stated in the introduction.

In what follows, let us use the symbol $\ast$ for denoting either the letter R or any integer $k$ ($k\ge 1$). 

First note that the above-defined invariants clearly map the stacking product of two concordances to the composition of their images:  
\begin{proposition}\label{lem:additivity}
 For all links $L,L'$ and $L''$, and for all $C\in \mathcal{C}(L,L')$ and $C'\in \mathcal{C}(L',L'')$, we have 
 $\varphi^{L,L''}_\ast(C\cdot C')=\varphi^{L,L'}_\ast(C')\circ \varphi^{L',L''}_\ast(C)$.
\end{proposition}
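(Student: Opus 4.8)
The plan is to unwind the definition of $\varphi_\ast$ as a composition of inclusion-induced isomorphisms and then use functoriality of $\pi_1$ together with the way stacking is built out of gluing complements. Fix $C\in\mathcal C(L,L')$ and $C'\in\mathcal C(L',L'')$, and write $D=C\cdot C'\in\mathcal C(L,L'')$. The key geometric observation is that the complement $(B^3\times[0,1])\setminus D$ is obtained by gluing $(B^3\times[0,1])\setminus C$ to $(B^3\times[0,1])\setminus C'$ along the copy of $B^3\setminus L'$ sitting at the interface level; this is a decomposition along which van Kampen applies after passing to nilpotent (resp.\ reduced) quotients, exactly because Stallings' theorem \cite[Thm.~5.1]{Stallings} and \cite[Lem.~5]{Milnor} guarantee that the relevant inclusions induce isomorphisms on $N_k$ (resp.\ R), so all four pieces have the same $N_k$ (resp.\ R).

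First I would set up notation for the inclusion maps. Let $j_C:(B^3\times[0,1])\setminus C\hookrightarrow (B^3\times[0,1])\setminus D$ and $j_{C'}:(B^3\times[0,1])\setminus C'\hookrightarrow (B^3\times[0,1])\setminus D$ be the inclusions of the two stacked pieces (after the obvious rescaling $[0,1]\cup[0,1]\cong[0,1]$), with basepoints tracked along $p\times[0,1]$. Then the bottom link complement $B^3\setminus L$ includes into $(B^3\times[0,1])\setminus D$ either directly, or through $(B^3\times[0,1])\setminus C$ via $j_C$; these agree, so at the level of $N_k$ (applying $\iota$-superscripts as in \eqref{eq:Nk}) one gets $\iota_L^k(D)= (j_C)_\ast^k\circ \iota_L^k(C)$. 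Likewise the top complement $B^3\setminus L''$ includes through $(B^3\times[0,1])\setminus C'$, giving $\iota_{L''}^k(D)=(j_{C'})_\ast^k\circ \iota_{L''}^k(C')$. Finally the middle complement $B^3\setminus L'$ includes into $(B^3\times[0,1])\setminus D$ in two ways — through $j_C$ (as the ``top'' of $C$) and through $j_{C'}$ (as the ``bottom'' of $C'$) — and these two inclusions are homotopic rel basepoint inside the glued complement, so $(j_C)_\ast^k\circ\iota_{L'}^k(C)=(j_{C'})_\ast^k\circ\iota_{L'}^k(C')$ as maps $N_k\pi_{L'}\to N_k\pi_D$. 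Since by Stallings all the maps $(j_C)_\ast^k$, $(j_{C'})_\ast^k$, $\iota_\bullet^k$ are isomorphisms, one can solve: $\varphi_{D,k}=(\iota_L^k(D))^{-1}\circ\iota_{L''}^k(D)$, and substituting the three identities and cancelling $(j_C)_\ast^k$, $(j_{C'})_\ast^k$ yields $\varphi_{D,k}=(\iota_L^k(C))^{-1}\circ\iota_{L'}^k(C)\circ(\iota_{L'}^k(C'))^{-1}\circ\iota_{L''}^k(C')=\varphi_{C,k}\circ\varphi_{C',k}$, which is the claimed $\varphi^{L,L''}_k(C\cdot C')=\varphi^{L,L'}_k(C')\circ\varphi^{L',L''}_k(C)$ — note the order reversal is exactly what the bookkeeping produces, matching the statement. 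The case $\ast=\mathrm R$ is identical, replacing $N_k$ by R throughout and using \eqref{eq:R} and \cite[Lem.~5]{Milnor} in place of Stallings.

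The main obstacle, and the only point requiring genuine care, is the claim that the two inclusions of $B^3\setminus L'$ into the glued complement $(B^3\times[0,1])\setminus D$ are homotopic rel basepoint: this is where the ``collar'' structure of a concordance is used, namely that a neighborhood of the interface level in $D$ looks like $L'\times(-\epsilon,\epsilon)$, so the two copies of $B^3\setminus L'$ (one just below, one just above the interface) are isotopic in the complement, the isotopy sweeping across the product region, and the chosen basepoint arcs along $p\times[0,1]$ are compatible with this isotopy. Once this is granted, everything else is formal diagram-chasing with isomorphisms of groups. I would state the homotopy claim carefully as a short lemma or inline remark, then present the diagram chase; I expect the whole argument to be no more than a paragraph of actual text, since it is, as the paper says, essentially immediate from the constructions.
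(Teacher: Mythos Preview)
The paper offers no proof of this proposition at all; it is merely prefaced with ``note that the above-defined invariants clearly map the stacking product of two concordances to the composition of their images''. Your argument supplies exactly the routine verification the authors omit: factor the boundary inclusions of $L$ and $L''$ through the two halves of the stacked complement, observe that the two inclusions of $B^3\setminus L'$ at the interface level agree, and chase the resulting commutative diagram. This is correct, and your remark that $(j_C)_\ast^k$ and $(j_{C'})_\ast^k$ are isomorphisms is justified not by a direct Stallings/Mayer--Vietoris argument but simply because $\iota_L^k(D)=(j_C)_\ast^k\circ\iota_L^k(C)$ with both $\iota_L^k(D)$ and $\iota_L^k(C)$ already known to be isomorphisms.

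One small point of caution: your computation correctly yields $\varphi_{D,k}=\varphi_{C,k}\circ\varphi_{C',k}$, i.e.\ $\varphi^{L,L''}_k(C\cdot C')=\varphi^{L,L'}_k(C)\circ\varphi^{L',L''}_k(C')$. The displayed formula in the paper has the arguments $C$ and $C'$ swapped (indeed $\varphi^{L,L'}_\ast(C')$ is not even well-typed, since $C'\in\mathcal C(L',L'')$), so this is a typographical slip in the statement rather than an ``order reversal'' produced by the bookkeeping. Your derivation is the correct one; just don't let the printed formula mislead you into thinking an extra swap occurred.
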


\subsubsection{Proof of Theorem \ref{thm:surj}} 

Consider two concordant elements $C$ and $C'$ in  $\mathcal{C}(L,L')$, and pick a concordance 
$W: \sqcup_{i} \left( (S^1\times [0,1])\times [0,1] \right)_i\hookrightarrow (B^3\times [0,1])\times [0,1]$
from $C$ to $C'$.

In a similar way as in Section \ref{sec:artin}, we can consider the upper and lower inclusions of the complements of $C$ and $C'$ into the complement of $W$ in $(B^3\times [0,1])\times [0,1]$. 
Using Stallings' theorem  \cite[Thm.~5.1]{Stallings}, these inclusions induce isomorphisms at the level of the nilpotent quotients of the fundamental group (that Stallings' theorem indeed applies here is checked by a simple Mayer-Vietoris argument). 
Hence, denoting by $\pi_W$ the fundamental group of the complement of $W$, we have the following commutative diagram, for all $k\ge 1$: 
\begin{equation}
\vcenter{\hbox{\xymatrix{
    &  N_k \pi_L \ar[ld]_(.38)\simeq \ar[d]_(.38)\simeq \ar[rd]^(.38)\simeq & \\
    N_k \pi_C \ar[r]^(.38)\simeq & N_k \pi_W & N_k \pi_{C'}\ar[l]_(.38)\simeq \\
    &  N_k \pi_{L'} \ar[lu]^(.38)\simeq \ar[u]^(.38)\simeq \ar[ru]_(.38)\simeq &     
}}}.
\label{diagbis}
\end{equation}

This shows the following, which contains Theorem \ref{thm:surj}~(i). 
\begin{theorem}\label{thm:conc}
 The map $\varphi^{L,L'}_k$ is a surface-concordance invariant ($k\ge 1$). 
\end{theorem}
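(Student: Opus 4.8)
The plan is to show that $\varphi^{L,L'}_k(C) = \varphi^{L,L'}_k(C')$ whenever $C \stackrel{c}{\sim} C'$, by exhibiting an auxiliary complement whose fundamental group receives compatible inclusion-induced isomorphisms from the complements of $C$ and $C'$, as well as from the complements of $L$ and $L'$. First I would fix a surface-concordance $W$ from $C$ to $C'$, sitting in $(B^3 \times [0,1]) \times [0,1]$, and consider its exterior $X_W$; the key point is that the boundary conditions in the definition of $\stackrel{c}{\sim}$ guarantee that the pieces $(L \times \{0\}) \times [0,1]$ and $(L' \times \{1\}) \times [0,1]$ are contained in $W$, so that $B^3 \setminus L$ and $B^3 \setminus L'$ sit naturally (at the ``corners'') inside $X_W$, compatibly with the inclusions into the exteriors of $C$ and of $C'$. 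This is what makes the diagram \eqref{diagbis} commute on the nose.

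Next I would justify applying Stallings' theorem \cite[Thm.~5.1]{Stallings} to each of the four horizontal/diagonal inclusions in \eqref{diagbis}. As indicated in the excerpt, the hypothesis of Stallings (an $H_1$-isomorphism with $H_2$ of the source surjecting onto $H_2$ of the target, or the relevant variant) is verified by a Mayer--Vietoris computation: decompose $X_W$ as the union of a neighborhood of $X_C$ (a collar, hence homotopy equivalent to $X_C$) and the rest, and compute the low-degree homology. The point is that $W$ is a disjoint union of annuli $\times [0,1]$, i.e. essentially a product region, so removing it from $B^5$ changes homology in a controlled way; the meridians of $C$ and of $C'$ are identified, giving the $H_1$-isomorphism, and the $H_2$ condition follows since the relevant second homology is generated by tori which are already visible in $X_C$. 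Once Stallings applies, each inclusion in \eqref{diagbis} induces an isomorphism on every nilpotent quotient $N_k$, and the diagram commutes because it commutes at the space level.

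From the commutativity of \eqref{diagbis}, the composite $(\iota_L^k)^{-1} \circ \iota_{L'}^k$ computed through $N_k \pi_C$ equals the one computed through $N_k \pi_{C'}$: going up the left side from $N_k\pi_{L'}$ to $N_k\pi_C$ and then down to $N_k\pi_L$ gives $\varphi_{C,k}$, going through $N_k\pi_W$ gives the same map, and routing through $N_k\pi_{C'}$ gives $\varphi_{C',k}$ — all equal. Hence $\varphi^{L,L'}_k(C) = \varphi^{L,L'}_k(C')$, which is exactly the assertion of Theorem~\ref{thm:conc}, and in particular of Theorem~\ref{thm:surj}~(i).

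The main obstacle I expect is the verification that Stallings' theorem genuinely applies to the inclusions involving $\pi_W$ — i.e., pinning down the precise homological hypotheses and carrying out the Mayer--Vietoris bookkeeping, especially the surjectivity on $H_2$, since $X_W$ is a codimension-two complement in $B^5$ and one must be careful about which $2$-cycles are present. Everything else (the commutativity of the space-level diagram, the passage to $N_k$, and the final diagram chase) is formal once the setup is in place; the only other point requiring a little care is checking that the ``corner'' inclusions of $B^3\setminus L$ and $B^3\setminus L'$ into $X_W$ are well defined and agree with the corresponding inclusions into $X_C$ and $X_{C'}$ up to the homotopies implicit in the collar structure.
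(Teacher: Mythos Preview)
Your proposal is correct and follows essentially the same approach as the paper: set up the commutative diagram~\eqref{diagbis} of nilpotent quotients using the complement of a surface-concordance $W$, invoke Stallings' theorem (checked via Mayer--Vietoris) to get the isomorphisms, and read off $\varphi_{C,k}=\varphi_{C',k}$ from commutativity. The paper is terser about the homological verification and the corner inclusions, but the argument is the same.
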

\begin{remark}\label{rem:concR}
 By  \cite[Lem.~1.3]{HL}, the reduced groups R$\pi_L$, R$\pi_{L'}$, R$\pi_C$, R$\pi_{C'}$ and R$\pi_W$ are all nilpotent groups of order at most $n$. 
 This shows that  (\ref{diagbis}) induces a similar commutative diagram involving these reduced groups, 
thus proving that $\varphi^{L,L'}_{\textrm{R}}$ is also a concordance invariant. Theorem \ref{thm:linkhombis} below refines this observation. 
\end{remark}

Recall that, if $L$ is a slice link, then $N_k \pi_L\simeq N_kF_n$; 
this isomorphism is not canonical, as it comes from (\ref{eq:Nk}) and thus relies upon the choice of a concordance from $L$ to $O_n$. 
The following, combined with Proposition \ref{lem:additivity}, is a generalized version of Theorem \ref{thm:surj}~(ii).
\begin{theorem}\label{thm:surjbis}
 If $L$ and $L'$ are slice, then $\varphi^{L,L'}_k$ induces a surjective map ($k\ge 1$) 
   $$\bfrac{\mathcal{C}(L,L')}{\stackrel{\small{c}}{\sim}}\rightarrow \textrm{Iso}_\textrm{c}\left(N_k \pi_{L'} ,N_k \pi_{L} \right). $$
\end{theorem}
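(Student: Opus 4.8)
\textbf{Proof plan for Theorem \ref{thm:surjbis}.}
The plan is to reduce to the case $L=L'=O_n$, and there to exhibit explicit embedded concordances realizing any prescribed element of $\textrm{Iso}_\textrm{c}\left(N_k\pi_{O_n},N_k\pi_{O_n}\right)=\textrm{Aut}_\textrm{c}(N_kF_n)$. First I would fix concordances $D\colon O_n\to L$ and $D'\colon O_n\to L'$, which exist since $L$ and $L'$ are slice; using the maps $\xi_X$ and $\eta_X$ of Definition \ref{def:maps} (more precisely the bijections $\eta^c_X$ and isomorphisms $\xi^c_X$ of Lemma \ref{thm:iso}) together with the functoriality of $\varphi^{\bullet,\bullet}_k$ from Proposition \ref{lem:additivity}, I would identify the map $\bfrac{\mathcal{C}(L,L')}{\stackrel{c}{\sim}}\to\textrm{Iso}_\textrm{c}(N_k\pi_{L'},N_k\pi_L)$ with the map $\bfrac{\mathcal{C}(O_n)}{\stackrel{c}{\sim}}\to\textrm{Aut}_\textrm{c}(N_kF_n)$, up to the fixed isomorphisms $N_k\pi_L\simeq N_kF_n\simeq N_k\pi_{L'}$ induced by $D$ and $D'$ via (\ref{eq:Nk}). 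Since the vertical arrows are bijections and the identification is compatible with $\varphi_k$, surjectivity in the general case is equivalent to surjectivity for $O_n$. Note the statement for $O_n$ is exactly Theorem \ref{thm:surj}~(ii) with $L=O_n$, so strictly speaking the content is the reduction; but since this is the ``generalized version'', I would carry out the $O_n$ case directly.

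For the $O_n$ case: an element of $\textrm{Aut}_\textrm{c}(N_kF_n)$ is determined by the conjugating words $l_1,\dots,l_n\in N_kF_n$ with $m_i\mapsto l_i^{-1}m_il_i$. The key step is to build, for each such tuple, an embedded concordance $C\in\mathcal{C}(O_n)$ whose associated longitudes (in the sense of Definition \ref{def:long}) represent the $l_i$. The natural construction is a ``string-link-like'' trick: take the trivial concordance $O_n\times[0,1]$ and modify the $i$th annulus by a ``finger move'' or by a sequence of crossing changes between $O_n\times[0,1]$ and pushed-in copies of a $2$-sphere/disk, pulling the $i$th sheet around so that a parallel arc traces out a loop representing $l_i$ in the complement. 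Equivalently, one can realize this as the trace of an isotopy of $O_n$ in $B^3$ that is the identity at both ends but whose ``motion'' of the $i$th meridian records the word $l_i$; a generator $m_j^{\pm1}$ of $F_n$ appearing in $l_i$ is realized by pushing the $i$th component once through (a band parallel to) the $j$th component and back. Since $O_n$ is slice, the complement of such $C$ is again slice-like, its nilpotent quotient is $N_kF_n$, and by construction $\varphi_{C,k}$ sends $m_i$ to $l_i^{-1}m_il_i$; so the map is surjective. The invariance up to $\stackrel{c}{\sim}$ is already Theorem \ref{thm:conc}.

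The main obstacle I anticipate is the realization step: producing an \emph{honestly embedded} annulus (not merely a self-singular one) whose $i$th longitude realizes a prescribed word $l_i$ \emph{simultaneously for all $i$}, while keeping distinct annuli disjoint and keeping the boundary $O_n$ fixed. Each letter $m_j^{\pm1}$ in $l_i$ asks the $i$th sheet to cross the $j$th sheet, which for an embedded surface must be done by a Whitney-type / finger-move detour rather than an actual intersection — one has to check these detours can be routed disjointly and that they do not alter the other longitudes $l_{i'}$ modulo $\Gamma_k$. A clean way to handle this is to first realize the desired automorphism by an element of the (welded or string-link) Artin representation and then ``thicken'' it to a concordance, invoking that the Artin action of string links (or of the motion group / loop braid group) surjects onto $\textrm{Aut}_\textrm{c}(N_kF_n)$ and that each such string link bounds, in $B^3\times[0,1]$, an embedded system of annuli from $O_n$ to $O_n$ (namely the trace of the defining isotopy, made to close up using sliceness of $O_n$). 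Verifying that the induced $\varphi_{C,k}$ agrees with the chosen automorphism is then a meridian-longitude bookkeeping computation in the Wirtinger presentation of $\pi_C$, modulo $\Gamma_{k+1}$.
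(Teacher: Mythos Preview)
Your proposal is correct and matches the paper's approach: the reduction to $L=L'=O_n$ via the bijections of Lemma~\ref{thm:iso} and the commutative square you describe is exactly diagram~(\ref{diag}) in the paper, and for the realization step the paper does precisely what you arrive at in your final paragraph --- it builds a welded string link $D_f$ from the conjugating words $l_i$ and sets $C_f:=\textrm{Tube}(D_f)$, citing the techniques of \cite[\S 4]{ABMW}. Your anticipated obstacle about obtaining \emph{embedded} annuli dissolves once you commit to the Tube construction, since Tube always produces ribbon (hence embedded) surfaces; the intermediate ideas about finger moves or isotopy traces are unnecessary detours.
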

\begin{proof} 
Pick some concordances $X\in \mathcal{C}(O_n,L)$ and $X'\in \mathcal{C}(L',L)$. 
By Lemma \ref{thm:iso}, we have a bijection  $\xi^{c}_X\circ \eta^{c}_{X'}$ from $\bfrac{\mathcal{C}(L,L')}{\stackrel{\small{c}}{\sim}}$ to $\bfrac{\mathcal{C}(O_n)}{\stackrel{\small{c}}{\sim}}$, and the induced identifications of $N_k \pi_L$ and $N_k \pi_{L'}$ with $N_kF_n$ yield a commutative diagram 
 \begin{equation}
 \vcenter{\hbox{\xymatrix{
   \bfrac{\mathcal{C}(L,L')}{\stackrel{\small{c}}{\sim}}\ar[d]_{\xi^{c}_X\circ \eta^{c}_{X'}}^{\textrm{bij.}} \ar[r]^(.38){\, \, \varphi^{L,L'}_k}  & \textrm{Iso}_\textrm{c}\left(N_k \pi_{L'} ,N_k \pi_{L} \right)\ar[d]^{\textrm{bij.}} \\
   \bfrac{\mathcal{C}(O_n)}{\stackrel{\small{c}}{\sim}}\ar[r]^(.38){\, \, \varphi^{O_n}_k} & \textrm{Aut}_\textrm{c}\left(  N_k F_n \right) 
    }}}.
    \label{diag}
\end{equation} 
So we only have to prove the result in the case $L=L'=O_n$, 
i.e. that the bottom horizontal map $\varphi^{O_n}_k$ is surjective. 
By Theorem \ref{thm:conc}, this is actually a consequence of the surjectivity of the map  
$$\varphi^{O_n}_k: \mathcal{C}(O_n)\longrightarrow \textrm{Aut}_\textrm{c}\left(  N_k F_n \right)$$ itself, which can be seen as follows. 
Any element $f$ of $\textrm{Aut}_\textrm{c}\left(  N_k F_n \right)$ is completely characterized by an $n$-tuple of elements in $N_k F_n$, 
which are the conjugating elements $l_i$ from (\ref{eq:conjug}). 
Now, each of these $l_i$ is represented by a word  in the free generators $x_j$, and the techniques of \cite[\S~4]{ABMW} 
can be used to build a welded string link $D_f$ such that $C_f:=\textrm{Tube}(D_f)$ satisfies $\varphi^{O_n}_k(C_f)$. 
(See Section \ref{sec:more} for welded string links and the Tube map.) We do not recall the machinery from \cite{ABMW} here, 
but rather illustrate the procedure for defining $D_f$ on a simple example in Figure \ref{fig:sl2}. 
As the figure suggests, only the successive over-arcs met when running along each strand of $D_f$ is relevant, 
and these can be connected by virtual arcs in any arbitrary way. 
\begin{figure}[h!]
\[
f: \left\{ {\begin{array}{ccl}
 l_1& = & x_3 \\
 l_2& = & 1 \\
 l_3& = & x_2^{-1} x_3^{-1}    
          \end{array}}
    \right.
    \quad \mbox{\larger$\leadsto$} \quad  
    \vcenter{\hbox{\includegraphics[height=2cm]{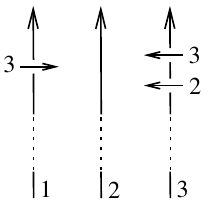}}}
   \quad \mbox{\larger$\leadsto$}  \quad 
   D_f:=\vcenter{\hbox{\includegraphics[height=2cm]{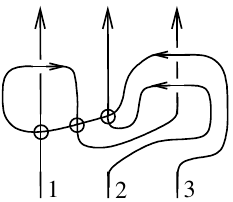}}}
\]
\caption{Building a preimage for $f\in \textrm{Aut}_\textrm{c}\left(  N_k F_n \right)$}\label{fig:sl2}
\end{figure}
\end{proof}

\subsubsection{Proof of Theorem \ref{thm:linkhom} }\label{sec:lhcol}

We first prove the link-homotopy invariance result (i), using the notion of R-coloring for surface diagrams.
Throughout this section, fix systems of meridians $\{m_i\}$ and $\{m'_i\}$ for $L$ and $L'$ as in Definition \ref{def:merid}, hence a basing for all elements of $\mathcal{C}(L,L')$, and more generally elements of $\mathcal{C}^s(L,L')$.

Recall that an immersed surface $S$ in $4$-space can be represented by a \emph{broken surface diagram} $D_S$, as follows. 
Consider the image of $S$ by a projection onto $3$-space; generically, its singular set only contains lines and/or circles of double points, 
which may intersect at triple points and may contain singular  or branch points, 
and the diagram $D_S$ encodes the over/under information along the lines of double points by erasing a neighborhood of the preimages with lower coordinate 
along the projection axis, see \cite{CKS}. Each singular point of $S$ projects to an isolated singular point of $D_S$, lying on a line of double points where the over/under information swaps. 
The \emph{regions of $D_S$} are the connected components of $D_S$ with these singular points removed; in particular, there are locally three regions near a regular double point,\footnote{Here, a regular double point is a double point which is not a triple, singular or branch point. } as illustrated in Figure \ref{fig:W}. 
\begin{figure}[h!]
  \[
\includegraphics[scale=0.8]{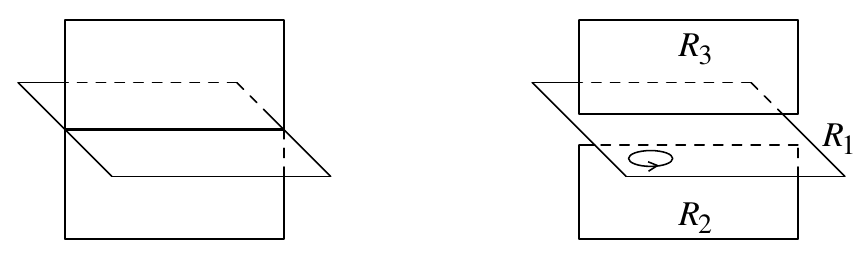}
  \]
  \caption{Generic projection of a surface in $4$-space and the corresponding diagram at a regular double point}
\label{fig:W}
\end{figure}
\\
It is well-known that two broken surface diagrams represent isotopic surfaces in $4$--space if and only if they are connected by a finite sequence of \emph{Roseman moves} \cite{Roseman}. 
This result was extended to surfaces up to (link-)homotopy in \cite{AMW}, by adding three (self-)\emph{singular Roseman moves}. 
\medskip 

Recall that a \emph{surface diagram $G$-coloring} is a map $\sigma$ from the set of all regions of $D_S$ to some group $G$ 
such that near each regular double point, if we denote by $R_1$, $R_2$ and $R_3$ the three regions according to the orientation of $S$ as in Figure \ref{fig:W}, 
we have $\sigma(R_3)= \sigma(R_1)^{-1} \sigma(R_2) \sigma(R_1)$ in $G$;  
see  \cite[\S~4.1.3]{CKS}.
The \emph{color} of a region is simply its image by $\sigma$. 
\medskip

Let $D_L$ be the diagram of the link $L$ obtained by projecting in the direction of the vector $(-v_p)$.\footnote{Recall that $v_p$ is the unit normal vector at the basepoint  $p\in S^2=\partial B^3$; this projection may be assumed to be generic, up to a small perturbation of $p$.}   
Consider a $\pi_L$-coloring of $D_L$ which yields a Wirtinger presentation for $\pi_L$ 
and such that, that for each $i$, the arc of $D_L$ enlaced by the meridian $m_i$ is colored by the element of $\pi_L$ represented by $m_i$.
Let us \emph{fix} an $\textrm{R}\pi_L$-coloring of $D_L$ which is the image of such a  $\pi_L$-coloring by the natural projection $\pi_L\rightarrow \textrm{R}\pi_L$. 

\begin{definition}\label{def:Rcol}
Let $S\in \mathcal{C}^s(L,L')$ be a self-singular concordance from  $L$ to $L'$. 
Let $D_S$ be a  broken surface diagram for $S$, which restricts to $D_L$ on $B^3\times \{0\}$. 
An \emph{R-coloring} of $D_S$ is an $\textrm{R}\pi_L$-coloring which coincides with the fixed coloring for $L$ on the lower boundary. 
Given an R-coloring for $S$, 
the \emph{terminal coloring} is the data of the colors in R$\pi_L$ of the $n$ regions intersecting the meridians $\{m'_i\}$ of $L'$ on the upper boundary.
\end{definition}

It is easily verified that each Roseman or singular Roseman move preserves a given coloring, 
in the sense that it is left unchanged outside a $3$-ball supporting this move, and can be extended in a unique and consitent way inside the $3$-ball. 
Hence we have : 
\begin{claim}\label{claim:col}
Let $S$ and $S'$ be two elements of $\mathcal{C}^s(L,L')$, which admit diagrams $D_S$ and $D_{S'}$ 
that differ by a single Roseman or singular Roseman move. 
Let $B$ be a 3-ball where the local move is supported. 
\item (i)~There is a natural isomorphism from $R\pi_S$ to $R\pi_{S'}$, 
 which is induced from the extension of the identity map outside the 3-ball $B$. 
\item (ii)~An $\textrm{R}\pi_S$-coloring of $D_S$ gives a unique $\textrm{R}\pi_{S'}$-coloring of $D_{S'}$ such that the colorings outside $B$ correspond by the isomorphism from $R\pi_S$ to $R\pi_{S'}$. 
\end{claim}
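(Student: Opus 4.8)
The plan is to verify Claim \ref{claim:col} move by move, exploiting the fact that a Roseman (or singular Roseman) move is supported in a $3$-ball $B$ inside the projection target, so that $D_S$ and $D_{S'}$ literally coincide outside $B$. First I would set up the general framework: for a broken surface diagram $D$, the Wirtinger-type presentation of $\pi_S$ has one generator per region of $D$ and one relation per regular double point (the relation $\sigma(R_3)=\sigma(R_1)^{-1}\sigma(R_2)\sigma(R_1)$ of Figure \ref{fig:W}); quotienting by the reduced-group relations gives a presentation of $\textrm{R}\pi_S$. The content of part (i) is then purely algebraic: the two presentations obtained from $D_S$ and $D_{S'}$ are Tietze-equivalent, with the Tietze transformations localized to the generators and relations coming from regions and double points inside $B$; the generators coming from regions outside $B$ match up, and this matching induces the asserted isomorphism $\textrm{R}\pi_S\xrightarrow{\sim}\textrm{R}\pi_{S'}$. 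Part (ii) is then immediate: an $\textrm{R}\pi_S$-coloring of $D_S$ is exactly a homomorphism from the free group on the regions to $\textrm{R}\pi_S$ satisfying the double-point relations; the colors of the regions outside $B$ are transported verbatim (reading them as elements of $\textrm{R}\pi_{S'}$ via the isomorphism of (i)), and one checks that these boundary colors determine a unique consistent extension across the interior of $B$.

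The core of the proof is therefore the case-by-case local analysis. I would organize it as a finite list: the seven Roseman moves (the $1$--$1$ ``reverse'' move on a branch point, the $2$--$2$ moves involving double-point curves, the $3$--$3$ tetrahedral/triple-point move, and the moves creating or cancelling a pair of double-point curves or a pair of triple points, etc., following \cite{Roseman}), plus the three singular Roseman moves added in \cite{AMW}. For each, I would (a) identify the finitely many regions of $D_S$ meeting $\partial B$ — these are the ``input'' regions whose colors are fixed — and the finitely many regions contained in $\operatorname{int}B$, (b) write down the double-point relations inside $B$, and (c) observe that in each case the inner relations either uniquely solve for the inner region colors in terms of the boundary ones, or are automatically consistent. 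The key mechanism making this work is that each such move is, by construction, a ``trivial'' local modification: the relations it adds or removes are, modulo the reduced-group relations, consequences of the relations outside $B$ together with the remaining inner relations. For the honest Roseman moves this is classical (it is the statement that Roseman moves generate ambient isotopy, hence preserve $\pi_S$ and a fortiori $\textrm{R}\pi_S$); for the singular Roseman moves the point is precisely that they are designed so that the relation forced by a sheet passing through a self-intersection — of the form $[m, w^{-1}mw]=1$ for a meridian $m$ — is already imposed in $\textrm{R}\pi_S$ by definition of the reduced group, so no information is lost or gained.

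I expect the main obstacle to be the bookkeeping for the singular Roseman moves: one has to make sure that the colors assigned near a branch or singular point are consistent not just as elements of $\pi_S$ (where they might genuinely fail to match) but as elements of $\textrm{R}\pi_S$, and this is where the hypothesis that we color in the \emph{reduced} group — rather than in $\pi_S$ itself — is essential. Concretely, at a singular point the over/under information swaps along a double-point line, so a naive Wirtinger relation would conjugate a meridian $m_i$ by some word $w$ representing a loop that bounds in the singular sheet; the resulting identification $w^{-1}m_i w = m_i$ is exactly a defining relation of $\textrm{R}\pi_S$, so the coloring extends. I would phrase this carefully once, in a preliminary lemma about colorings near branch and singular points, and then invoke it uniformly in the case analysis. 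The honest Roseman moves, by contrast, I expect to be routine and I would treat them briskly, citing \cite{Roseman,CKS} for the local pictures and only spelling out one representative computation (say the triple-point move) in detail.

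Finally, I would note for completeness that the isomorphism of (i) is compatible with the inclusions of the boundary link complements — it is the identity on the regions coming from $D_L$ and $D_{L'}$ — so that, once Claim \ref{claim:col} is chained over a finite sequence of moves, the terminal coloring is transported correctly; this is the observation that will feed into the proof of Theorem \ref{thm:linkhom}(i) proper.
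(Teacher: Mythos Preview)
Your proposal is correct and is precisely the case-by-case verification that the paper has in mind: the paper does not actually supply a proof of this claim, merely prefacing it with ``It is easily verified that each Roseman or singular Roseman move preserves a given coloring, in the sense that it is left unchanged outside a $3$-ball supporting this move, and can be extended in a unique and consistent way inside the $3$-ball.'' Your plan spells out what this verification entails, and your identification of the key point --- that for the singular Roseman moves the potentially obstructing relation is of the form $[m_i,w^{-1}m_iw]=1$ and hence already holds in the reduced group --- is exactly the mechanism the authors are relying on.
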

In particular, since in our context ambient isotopies and link-homotopies are assumed to fix the boundary, we observe that the terminal coloring is preserved by any (singular) Roseman move.
\medskip 

Any element  $C$ of $\mathcal{C}(L,L')$ admits an R-coloring, called \emph{Wirtinger R-coloring}, as follows. 
It is well-known that a Wirtinger presentation for the fundamental group $\pi_C$ of the complement of $C$ can be given from a broken surface diagram \cite[\S~3.2.2]{CKS}.  
Since by (\ref{eq:R}) we know that $\textrm{R}\pi_C$ is isomorphic to $\textrm{R}\pi_L$, 
taking the image of each generator of $\textrm{R}\pi_C$ yields the desired R-coloring.
Moreover, by definition, the invariant $\varphi^{L,L'}_{\textrm{R}}(C)$ can be entirely recovered from this Wirtinger R-coloring: 
the values $\varphi_{C,R}(m'_i)\in \textrm{R}\pi_L$ are simply given by the terminal coloring.

The following lemma tells us that, actually, the terminal coloring of \emph{any} R-coloring of any surface diagram determines $\varphi^{L,L'}_{\textrm{R}}$. 
\begin{proposition}\label{lem:unik}
There is a unique terminal coloring, for any surface diagram of a given element of $\mathcal{C}(L,L')$. 
\end{proposition}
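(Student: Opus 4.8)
The goal is to show that the terminal coloring of an R-coloring of a broken surface diagram $D_C$ does not depend on the R-coloring chosen, once the restriction to the lower boundary $D_L$ is fixed. The plan is to exploit the fact, recalled just before the statement, that any embedded concordance $C$ admits a distinguished \emph{Wirtinger R-coloring} built from the Wirtinger presentation of $\pi_C$. So it suffices to show that \emph{every} R-coloring of \emph{any} diagram of $C$ induces the same terminal coloring as the Wirtinger R-coloring of \emph{some} fixed diagram; combined with Claim \ref{claim:col}, which says terminal colorings are preserved under (singular) Roseman moves, this reduces the whole statement to a single fixed diagram $D_C$ and its Wirtinger R-coloring.

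First I would observe that, for a fixed diagram $D_C$, a surface-diagram $\textrm{R}\pi_L$-coloring is precisely the same data as a group homomorphism $\textrm{R}\pi_C\to \textrm{R}\pi_L$: indeed the regions of $D_C$ correspond to Wirtinger generators of $\pi_C$, the double-point relations $\sigma(R_3)=\sigma(R_1)^{-1}\sigma(R_2)\sigma(R_1)$ are exactly the Wirtinger relations, and since the target $\textrm{R}\pi_L$ is reduced (each meridian commutes with its conjugates, as in Section \ref{sec:reduced}), any such assignment on generators descends through $\pi_C\twoheadrightarrow \textrm{R}\pi_C$. Thus an R-coloring of $D_C$ is a homomorphism $h:\textrm{R}\pi_C\to \textrm{R}\pi_L$ whose precomposition with $\iota_L^R:\textrm{R}\pi_L\to\textrm{R}\pi_C$ of \eqref{eq:R} is the fixed coloring for $L$ — i.e. (after choosing the fixed coloring to be induced by the natural identification) the condition $h\circ \iota_L^R=\textrm{id}_{\textrm{R}\pi_L}$, or more precisely $h\circ\iota_L^R$ equals the prescribed map. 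Since $\iota_L^R$ is an \emph{isomorphism} by \eqref{eq:R}, this pins down $h$ uniquely: $h=(\iota_L^R)^{-1}$ composed with the fixed identification. Consequently the R-coloring of a fixed diagram is unique, and in particular its terminal coloring — the colors of the regions meeting the upper meridians $m'_i$, i.e. the images $h(\iota_{L'}^R(m'_i))=\varphi_{C,R}(m'_i)$ — is forced.

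Then I would patch the two halves together: given two surface diagrams $D_C$ and $D'_C$ of the same $C$, both restricting to $D_L$ on the bottom, connect them by a finite sequence of Roseman moves fixing the boundary (using \cite{Roseman}, and that the restriction to the boundary is unchanged throughout, so the moves can be taken in the interior). By Claim \ref{claim:col}(ii) each move transports the (unique) R-coloring of one diagram to an R-coloring of the next, and by the remark following Claim \ref{claim:col} the terminal coloring is preserved at each step; hence the terminal colorings of $D_C$ and $D'_C$ agree. Combined with the uniqueness on a fixed diagram established above, this proves that there is a single terminal coloring attached to $C$.

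\textbf{Main obstacle.} The delicate point is the first reduction — that an R-coloring of a fixed diagram really is the same as a homomorphism $\textrm{R}\pi_C\to\textrm{R}\pi_L$ restricting correctly on the boundary, and in particular that such a homomorphism is \emph{unique}. One must be careful that the Wirtinger-type relations read off the broken surface diagram at regular double points generate \emph{all} relations of $\pi_C$ (this is the content of \cite[\S 3.2.2]{CKS}), that no further constraints arise at triple, branch, or singular points beyond consistency already guaranteed, and that the passage to the reduced quotient is compatible with the diagrammatic coloring rule (which only sees conjugates of meridians, so lands naturally in the reduced setting). Once these bookkeeping facts are in place, uniqueness is immediate from the isomorphism \eqref{eq:R}, and the rest is the routine Roseman-move argument.
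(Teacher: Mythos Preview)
Your proof is correct and takes a genuinely different route from the paper's. You argue directly: an R-coloring of a fixed diagram $D_C$ is the same data as a homomorphism $h:\textrm{R}\pi_C\to\textrm{R}\pi_L$ with $h\circ\iota_L^R$ equal to the fixed boundary coloring (here the passage to $\textrm{R}\pi_C$ uses that the boundary condition forces each region to be colored by a conjugate of some $m_i$, as you flag), and since $\iota_L^R$ is an isomorphism by~\eqref{eq:R} this forces $h=(\iota_L^R)^{-1}$; you then transport between diagrams via Roseman moves and Claim~\ref{claim:col}. The paper, by contrast, argues by contradiction and never invokes~\eqref{eq:R}: assuming two distinct terminal colorings on some diagram of $C$, it stacks to produce an $\textrm{R}\pi_L$-coloring of a diagram of $\overline{C}\cdot C\in\mathcal{C}(L')$ whose top and bottom differ; but $\overline{C}\cdot C\stackrel{lh}{\sim}L'\times[0,1]$ by Lemma~\ref{lem:inverse}, and since (singular) Roseman moves preserve boundary colorings this contradicts the evident fact that the product diagram $D_{L'}\times[0,1]$ has identical top and bottom colors. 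Your approach is more structural and actually yields more --- it shows the \emph{entire} R-coloring is unique and simultaneously proves Corollary~\ref{cor:terminal} by identifying the terminal colors with $\varphi_{C,R}(m'_i)$ --- while the paper's argument stays purely diagrammatic, trading the Stallings-type isomorphism for the topological input of Lemma~\ref{lem:inverse}.
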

\begin{proof}
Suppose that there exists at least two R-colorings with two different terminal colorings, 
for some surface diagram of $C\in \mathcal{C}(L,L')$. 
Then $\overline{C}\cdot C$ is an element of $\mathcal{C}(L')$ with a different coloring at $B^3\times \{0\}$ and $B^3\times \{1\}$. 
But by Lemma \ref{lem:inverse}, we know that  $\overline{C}\cdot C$ is link-homotopic to $L'\times [0,1]$, 
which cannot admit different colorings at the top and bottom boundaries.
This leads to a contradiction,  since $\overline{C}\cdot C$ and $L'\times [0,1]$ are related by a sequence of Roseman and singular Roseman moves, 
and since each of these moves preserves the colorings at the boundary. 
\end{proof}
\begin{corollary}\label{cor:terminal}
 Let $C\in \mathcal{C}(L,L')$.  
 The terminal coloring of any surface diagram of $C$ determines $\varphi^{L,L'}_{\textrm{R}}(C)$.
\end{corollary}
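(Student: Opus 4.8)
The plan is to deduce Corollary \ref{cor:terminal} directly from Proposition \ref{lem:unik} together with the discussion immediately preceding it. Recall what was already established: any embedded concordance $C\in\mathcal{C}(L,L')$ admits a distinguished \emph{Wirtinger R-coloring} on some chosen broken surface diagram $D_C$, and from this particular coloring the invariant $\varphi^{L,L'}_{\textrm{R}}(C)$ is recovered — namely, $\varphi_{C,R}(m'_i)\in\textrm{R}\pi_L$ is exactly the color of the region met by the meridian $m'_i$ on the top boundary, i.e.\ the $i$th entry of the terminal coloring. So the value of $\varphi^{L,L'}_{\textrm{R}}(C)$ is, \emph{a priori}, read off from the terminal coloring of this one specific R-coloring of this one specific diagram.

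First I would invoke Proposition \ref{lem:unik}: for any surface diagram of $C$ there is a \emph{unique} terminal coloring, and moreover — since ambient isotopies between diagrams are realized by sequences of Roseman moves, each of which carries R-colorings to R-colorings bijectively while fixing the boundary colorings (Claim \ref{claim:col})— the terminal coloring is the same across all diagrams of $C$. Thus the assignment ``$C\mapsto$ terminal coloring of any diagram of $C$'' is well defined, independently of the choice of diagram and of the choice of R-coloring. Since the Wirtinger R-coloring on $D_C$ is one instance of such an R-coloring, its terminal coloring coincides with the unique terminal coloring of any diagram of $C$; but that Wirtinger terminal coloring is precisely the tuple $\big(\varphi_{C,R}(m'_1),\dots,\varphi_{C,R}(m'_n)\big)$. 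Finally, because $\varphi^{L,L'}_{\textrm{R}}(C)=\varphi_{C,R}$ lies in $\textrm{Iso}_\textrm{c}\big(\textrm{R}\pi_{L'},\textrm{R}\pi_L\big)$ — it acts by conjugation on meridians — it is entirely determined by the conjugating elements $l_i$, equivalently by the images $\varphi_{C,R}(m'_i)=l_i^{-1}m_i l_i$; and these images are exactly the terminal coloring. Hence the terminal coloring of \emph{any} surface diagram of $C$ determines $\varphi^{L,L'}_{\textrm{R}}(C)$.

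There is essentially no obstacle here: the corollary is a bookkeeping consequence of Proposition \ref{lem:unik} plus the already-noted facts that (a) the Wirtinger R-coloring exists and its terminal coloring \emph{is} $\varphi_{C,R}$ evaluated on the meridians, and (b) $\varphi_{C,R}$ is determined by those meridian values. The only point requiring a word of care is making explicit that the Wirtinger R-coloring on the diagram $D_C$ is a legitimate R-coloring in the sense of Definition \ref{def:Rcol} — that is, that it restricts to the fixed $\textrm{R}\pi_L$-coloring of $D_L$ on the lower boundary — so that Proposition \ref{lem:unik} applies to it and forces its terminal coloring to agree with that of every other diagram; this was arranged by construction when the fixed coloring of $D_L$ was chosen to be compatible with the meridians $\{m_i\}$.
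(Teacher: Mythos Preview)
Your argument is correct and is precisely the reasoning the paper intends: the corollary is stated without proof as an immediate consequence of Proposition~\ref{lem:unik} and the preceding paragraph identifying the Wirtinger R-coloring's terminal coloring with the tuple $\big(\varphi_{C,R}(m'_i)\big)_i$. Your added remark, that Claim~\ref{claim:col} is what guarantees the terminal coloring is constant across different diagrams of $C$ (so that Proposition~\ref{lem:unik}'s uniqueness on each diagram upgrades to a single well-defined terminal coloring for $C$), is a useful clarification that the paper leaves implicit.
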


We can now state the following link-homotopy invariance result, which in particular contains Theorem \ref{thm:linkhom}~(i).
\begin{theorem}\label{thm:linkhombis}
The map $\varphi^{L,L'}_{\textrm{R}}$ is  a link-homotopy invariant.
\end{theorem}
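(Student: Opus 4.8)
The plan is to show that $\varphi^{L,L'}_{\textrm{R}}$ is invariant under link-homotopy by reducing everything to the behaviour of R-colorings of broken surface diagrams. First I would recall that, by the result of Audoux--Meilhan--Wagner extending Roseman's theorem, two self-singular concordances from $L$ to $L'$ are link-homotopic if and only if their broken surface diagrams are related by a finite sequence of Roseman moves and singular Roseman moves (all supported in a $3$-ball, and all fixing the boundary in our setting). So it suffices to check that $\varphi^{L,L'}_{\textrm{R}}$ is unchanged under a single such move.

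Next I would invoke Corollary \ref{cor:terminal}: for an \emph{embedded} concordance $C$, the isomorphism $\varphi^{L,L'}_{\textrm{R}}(C)$ is completely determined by the terminal coloring of \emph{any} surface diagram of $C$ equipped with \emph{any} R-coloring (not just the Wirtinger one). This is the crucial reduction, since the terminal coloring is a far more flexible invariant to track through a local move than the abstract isomorphism of reduced groups. The point is that, although $\varphi^{L,L'}_{\textrm{R}}$ was defined only on $\mathcal{C}(L,L')$, the notion of R-coloring and terminal coloring makes sense on all of $\mathcal{C}^s(L,L')$, which is where a link-homotopy lives while passing through singular stages.

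Then the argument runs as follows. Let $C$ and $C'$ in $\mathcal{C}(L,L')$ be link-homotopic embedded concordances, and fix a sequence of diagrams $D_0 = D_C, D_1, \dots, D_N = D_{C'}$ in which consecutive diagrams differ by a single Roseman or singular Roseman move supported in a $3$-ball $B$. Start with the Wirtinger R-coloring of $D_C$. By Claim \ref{claim:col}, each move carries an R-coloring of $D_j$ to a well-defined R-coloring of $D_{j+1}$, agreeing with it outside $B$, and in particular leaving the coloring on the top boundary — hence the terminal coloring — unchanged, because our moves fix the boundary. Propagating through the whole sequence produces an R-coloring of $D_{C'}$ with the same terminal coloring as the starting Wirtinger R-coloring of $D_C$. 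By Proposition \ref{lem:unik} this terminal coloring coincides with the one coming from the Wirtinger R-coloring of $D_{C'}$, and then Corollary \ref{cor:terminal} identifies $\varphi^{L,L'}_{\textrm{R}}(C)$ with $\varphi^{L,L'}_{\textrm{R}}(C')$.

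The main obstacle I expect is the verification underlying Claim \ref{claim:col}: one must check, move by move, that each of the (finitely many) Roseman moves and the three singular Roseman moves admits a \emph{unique} consistent extension of a given R-coloring across the supporting $3$-ball. For the ordinary Roseman moves this is classical (it is essentially the statement that colorings are a diagrammatic model of $\pi_1$ of the complement, stable under isotopy), but the singular Roseman moves require genuinely using that we have passed to the \emph{reduced} group $\textrm{R}\pi$: near a singular point a meridian gets identified with a conjugate of itself, and the required relation $[m, x^{-1}mx]=1$ is precisely what is imposed in the reduced group, so the coloring equation remains solvable exactly there. I would isolate this as the technical heart — checking the singular moves — and treat the rest of the deduction as the formal bookkeeping described above. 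I should also note at the outset that everything is with respect to the fixed systems of meridians $\{m_i\}, \{m'_i\}$ and the fixed $\textrm{R}\pi_L$-coloring of $D_L$ introduced before Definition \ref{def:Rcol}, so that the terminal coloring is a well-defined datum throughout.
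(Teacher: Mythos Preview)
Your proposal is correct and follows essentially the same route as the paper's proof: reduce to a sequence of Roseman and singular Roseman moves via the result of \cite{AMW}, propagate an R-coloring through each move using Claim~\ref{claim:col} so that the terminal coloring is preserved, and conclude via Corollary~\ref{cor:terminal}. Your version is more detailed (you explicitly start from the Wirtinger R-coloring and invoke Proposition~\ref{lem:unik} before Corollary~\ref{cor:terminal}, and you spell out why the reduced-group relations are exactly what is needed at the singular moves), but the argument is the same.
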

\begin{proof}
Suppose that two elements $C$ and $C'$ of $\mathcal{C}(L,L')$ are link-homotopic, and pick surface diagrams $D_C$ and $D_{C'}$ for these elements. 
Consider some R-coloring for $D_C$. 
By \cite{AMW}, there is a finite sequence of Roseman and self-singular Roseman moves from $D_C$ to $D_{C'}$. Each of these moves preserves the terminal coloring, 
thus showing that $D_{C'}$ has same terminal coloring as $D_C$. 
Since by Corollary \ref{cor:terminal}, the terminal coloring determines $\varphi^{L,L'}_{\textrm{R}}$, this concludes the proof.
\end{proof}

We now observe that, in the case where $L$ and $L'$ are slice links, the induced map 
   $$\varphi^{L,L'}_{\textrm{R}}: \bfrac{\mathcal{C}(L,L')}{\stackrel{\small{lh}}{\sim}}\rightarrow \textrm{Iso}_\textrm{c}\left(\textrm{R}\pi_{L'},\textrm{R}\pi_{L} \right) $$
is a bijection. This follows, on one hand, from the fact that we have a commutative diagram similar to (\ref{diag}), as 
 \begin{equation*}
 \vcenter{\hbox{\xymatrix{
   \bfrac{\mathcal{C}(L,L')}{\stackrel{\small{lh}}{\sim}}\ar[d]_{\xi^{lh}_X\circ \eta^{lh}_{X'}}^{\textrm{bij.}} \ar[r]^(.38){\, \, \varphi^{L,L'}_{\textrm{R}}}  & \textrm{Iso}_\textrm{c}\left(\textrm{R} \pi_{L'},\textrm{R} \pi_{L} \right)\ar[d]^{\textrm{bij.}} \\
   \bfrac{\mathcal{C}(O_n)}{\stackrel{\small{lh}}{\sim}}\ar[r]^(.38){\, \, \varphi^{O_n}_{\textrm{R}}} & \textrm{Aut}_\textrm{c}\left(  \textrm{R} F_n \right) 
    }}}
\end{equation*} 
and, on the other hand, the fact that the lower horizontal map  $\varphi^{O_n}_{\textrm{R}}$
in this diagram is an isomorphism, as proved in  \cite{AMW}. 
Hence we have the following, which implies Theorem \ref{thm:linkhom}~(ii).
\begin{theorem}\label{thm:linkhombis2}
Let $L$ and $L'$ be slice links. 
The map $\varphi^{L,L'}_{\textrm{R}}$ is a complete link-homotopy invariant for $\mathcal{C}(L,L')$.
\end{theorem}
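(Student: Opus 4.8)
The plan is to deduce Theorem~\ref{thm:linkhombis2} by combining the link-homotopy invariance already established in Theorem~\ref{thm:linkhombis} with a bijectivity statement obtained by transporting the known classification over $O_n$ along the maps of Lemma~\ref{thm:iso}. First I would fix, as in the statement, two slice links $L$ and $L'$, together with concordances $X\in \mathcal{C}(O_n,L)$ and $X'\in \mathcal{C}(L',L)$, which exist precisely because $L$ and $L'$ are slice (hence concordant to $O_n$, hence to each other). By Lemma~\ref{thm:iso} the composite $\xi^{lh}_X\circ \eta^{lh}_{X'}$ is a bijection from $\bfrac{\mathcal{C}(L,L')}{\stackrel{\small{lh}}{\sim}}$ onto $\bfrac{\mathcal{C}(O_n)}{\stackrel{\small{lh}}{\sim}}$.

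Next I would assemble the square displayed just before the statement of the theorem. Its top map is $\varphi^{L,L'}_{\textrm{R}}$ (well-defined on link-homotopy classes by Theorem~\ref{thm:linkhombis}), its bottom map is $\varphi^{O_n}_{\textrm{R}}$, the left vertical arrow is the bijection $\xi^{lh}_X\circ \eta^{lh}_{X'}$, and the right vertical arrow is the bijection $\textrm{Iso}_\textrm{c}(\textrm{R}\pi_{L'},\textrm{R}\pi_{L})\to \textrm{Aut}_\textrm{c}(\textrm{R}F_n)$ induced by the identifications $\textrm{R}\pi_L\simeq \textrm{R}F_n$ and $\textrm{R}\pi_{L'}\simeq \textrm{R}F_n$ coming from \eqref{eq:R} applied to $X$ and $X'$. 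Commutativity of this square follows from the functoriality of $\varphi_\ast$ under stacking, i.e.\ Proposition~\ref{lem:additivity}: reading $\varphi^{L,L'}_{\textrm{R}}(X\cdot C\cdot \overline{X'})$ via that multiplicativity formula reproduces exactly the conjugation/composition on $\textrm{Aut}_\textrm{c}(\textrm{R}F_n)$ by the images of $X$ and $X'$, which is precisely what the right vertical identification encodes. Then, since the bottom map $\varphi^{O_n}_{\textrm{R}}$ is an isomorphism by \cite{AMW}, and the two vertical arrows are bijections, a diagram chase forces $\varphi^{L,L'}_{\textrm{R}}$ to be a bijection. Because $\varphi^{L,L'}_{\textrm{R}}$ is a link-homotopy invariant and is now seen to be injective on link-homotopy classes and surjective onto $\textrm{Iso}_\textrm{c}(\textrm{R}\pi_{L'},\textrm{R}\pi_{L})$, it is a complete link-homotopy invariant for $\mathcal{C}(L,L')$, which is the assertion.

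The step I expect to require the most care is verifying that the square genuinely commutes, and in particular that the right vertical identification is the correct one. The subtlety is that the isomorphism $\textrm{R}\pi_L\simeq \textrm{R}F_n$ is not canonical — it depends on the chosen concordance $X$ — so one must check that the \emph{same} choice of $X$ (and of $X'$) used in the left vertical bijection is used to build the right vertical one, and that with these matched choices the naturality square of $\varphi_\ast$ closes up on the nose rather than merely up to an inner automorphism. Concretely this reduces to expanding $\xi_X\circ\eta_{X'}(C)=X\cdot C\cdot \overline{X}\cdot\,(\text{absorbing }X')$ — here one uses $\overline{X'}\cdot X'\stackrel{lh}{\sim}L'\times[0,1]$ from Lemma~\ref{lem:inverse} to see that $\eta^{lh}_{X'}$ lands correctly — and then applying Proposition~\ref{lem:additivity} together with the definitions of $\iota^R$ in \eqref{eq:R}; all of this is bookkeeping, but it is the place where a sign or a composition order could go wrong. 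Everything else is formal: the existence of $X$ and $X'$ is immediate from sliceness, the vertical maps are bijections by Lemma~\ref{thm:iso}, and the bottom map is an isomorphism by the cited result, so once commutativity is in hand the conclusion is automatic.
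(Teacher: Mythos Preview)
Your proposal is correct and follows exactly the paper's own argument: set up the commutative square with $\xi^{lh}_X\circ\eta^{lh}_{X'}$ on the left, the identifications $\textrm{R}\pi_L\simeq\textrm{R}F_n\simeq\textrm{R}\pi_{L'}$ on the right, invoke \cite{AMW} for the bottom isomorphism, and conclude by a diagram chase. Your added discussion of why the square commutes (via Proposition~\ref{lem:additivity}) is more explicit than what the paper writes, which simply asserts the diagram; two small slips to clean up are that the correct expansion is $\xi_X(\eta_{X'}(C))=X\cdot C\cdot X'\cdot\overline{X}$ (your $\overline{X}$ is misplaced), and that for $X'\in\mathcal{C}(L',L)$ Lemma~\ref{lem:inverse} gives $\overline{X'}\cdot X'\stackrel{lh}{\sim}L\times[0,1]$ rather than $L'\times[0,1]$.
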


\section{Milnor invariants of concordances}\label{sec:Milnor}

As (\ref{eq:conjug}) shows, the Artin-type invariants defined in Section \ref{sec:phik} are completely determined by the choice of a longitude for each component of $C$. 
In this section, we explain how to extract numerical invariants of embedded concordances from these longitudes, by adapting the definition of Milnor link invariants of \cite{Milnor2} to the $4$-dimensional setting.

As before, throughout this section, $L$ and $L'$ are two concordant $n$-component links, $C$ is an element of $\mathcal{C}(L,L')$, and $\{m_i\}$ and $\{m'_i\}$ are systems of meridians for $L\subset (B^3\times \{0\})$ and $L'\subset (B^3\times \{1\})$, respectively. 

\subsection{Review of Milnor invariants for links}\label{sec:review}

Before we proceed with the definition of Milnor invariants of concordances, it is useful to recall the original construction for links, in the case of the link $L$.

Let $k\ge 1$ be fixed. 
Consider a parallel copy of the $i$th component of the link $L$, having linking number zero with it; 
using our fixed system of meridians for $L$, this defines unambiguously an element $\lambda_i\in N_k \pi_{L}$, 
called the $i$th preferred longitude of $L$. 
By \cite[Thm.~4]{Milnor2}, $\lambda_i$ can be expressed as a word in the (images of the) meridians $m_j$, which we still denote by $\lambda_i$. 

Recall that the \emph{Magnus expansion} $E(\lambda_i)$ of $\lambda_i$ is the formal power series in the noncommuting variables $X_1,\cdots,X_n$ obtained by substituting $1+X_j$ for $m'_j$, and $1-X_j+X_j^2-X_j^3+\cdots$ for $(m'_j)^{-1}$, for all $j$ \cite{MKS}.  

For each sequence $I=i_1 i_2 ...i_{k-1} i$ of (possibly repeating) elements in $\{1,\cdots,n\}$,
we denote by $\mu_{L}(I)$ the coefficient of $X_{i_1}...X_{i_{k-1}}$ in the Magnus expansion $E(\lambda_i)$. 
We also denote by $\Delta_{L}(I)$ the greatest common divisor of the integers $\mu_{L}(J)$, for all sequences $J$ obtained from $I$ by deleting at least one index and permuting cyclicly.

It is shown in \cite{Milnor2} that the residue class $\overline{\mu}_{L}(I)$ of the integer $\mu_{L}(I)$ modulo $\Delta_{L}(I)$
is an isotopy invariant of $L$, for any sequence $I$ of $\le k$ integers. 
These invariants are called \emph{Milnor invariants} of the link $L$. 

It is well-known that, for two concordant links $L$ and $L'$, 
then $\Delta_{L}(I)=\Delta_{L'}(I)$ for any sequence $I$, and all Milnor $\overline{\mu}$-invariants coincide, see \cite{Stallings}.

\subsection{4-dimensional Milnor invariants}\label{sec:4dimM}

We now return to $C\in \mathcal{C}(L,L')$, equipped with the basing $\{m_i\}$ and $\{m'_i\}$. 
Recall from Section \ref{sec:longitude} that 
an $i$th longitude of $C$ is an arc  running, in the boundary of a neighborhood of the $i$th component, from the fixed point $p_{i,0}$ in $(B^3\times \{0\})\setminus L$ to the fixed point $p_{i,1}$ in $(B^3\times \{1\})\setminus L'$.  (Recall that these fixed points are inherited from the choice of basing.) 

Now, there are many possible choices of $i$th longitude for $C$, for our given basing. 
Specifically, 
if $l_i$ and $l'_i$ are two elements of $N_k \pi_{L}$ given by two choices of $i$th longitude for $C$, 
then there exists $u,v\in \mathbb{Z}$ such that 
  \begin{equation}\label{claim:longitude}
    l'_i =  (m_i)^u (\lambda_i)^v l_i ,  
  \end{equation}
where $\lambda_i$ is given, as above, by an $i$th preferred longitude of $L$. 

A \emph{preferred $i$th longitude} of $C$ is an $i$th longitude whose canonical closure has 
linking number zero with the $i$th component of $C$ (meaning that its closure is null-homologous in the complement of the $i$th component). 
Unlike for links in $3$--space, there is not a unique preferred longitude for a given component:  
by (\ref{claim:longitude}), various preferred longitudes may differ by copies of $\lambda_i$. 

For each $k\ge 1$, the choice of a preferred $i$th longitude for $C$ induces an element of $N_k \pi_C$, 
and we denote by $l^0_i\in N_k \pi_{L}$ its image by the isomorphism $(\iota_{L}^k)^{-1}$ of (\ref{eq:Nk}); 
this element of $N_k \pi_{L}$ can be expressed as a word in the meridians $m_j$, also denoted by $l^0_i$. 

Let $I=i_1 i_2 ...i_{m} i$ be a sequence of elements in $\{1,\cdots,n\}$. 
We denote by 
$\mu^{(4)}_C(I)$ the coefficient of $X_{i_1}\cdots X_{i_{m}}$ in the Magnus expansion $E(l^0_i)$,  
and by $\overline{\Delta}_L(I)$ the greatest common divisor of $\mu_L(I)$ and $\Delta_{L}(I)$. 
Notice that $\overline{\Delta}_L(I)=\overline{\Delta}_{L'}(I)$, since $L$ and $L'$ are concordant. 

The main result of this section is Theorem \ref{thm:invmu}, stated in the introduction: 
for any sequence $I$ of at most\footnote{Since $k$ can be chosen arbitrarily, note that this condition is not restrictive. } $k$ elements in $\{1,\cdots,n\}$, 
the residue class $\overline{\mu}^{(4)}_C(I)$ of the integer $\mu^{(4)}_C(I)$ modulo $\overline{\Delta}_{L}(I)$ 
is an (ambient isotopy relative to the boundary) invariant of $C$.

\begin{definition}
The above invariants are called \emph{Milnor $\overline{\mu}^{(4)}$-invariants}. 
The \emph{length} of the invariant is the number of indices in the corresponding sequence. 
\end{definition}

\begin{remark}\label{rem:ark}
Since all Milnor invariants $\overline{\mu}(I)$ and all $\Delta(I)$ vanish for slice links, it follows from Theorem  \ref{thm:invmu} that, 
if $L$ and $L'$ are slice, the integer $\mu^{(4)}_C(I)$ is an invariant of $C$ for any sequence $I$.
\end{remark}

\begin{remark}
In the special case $L=L=O_n$, there is a canonical choice of basing, see \cite[\S 2.2.1.1]{ABMW}, 
and we recover in this way the invariants defined in \cite{ABMW,AMW}. 
As a matter of fact, the invariants defined and studied here are a vast generalization of those defined in \cite[\S~5.1]{ABMW} and \cite[\S~4.1]{AMW}. 
\end{remark}

\begin{proof}[Proof of Theorem \ref{thm:invmu}]
We must prove that the residue class $\overline{\mu}^{(4)}_C(I)$ is independent of the choices made in the construction. 
There are two types of choices to discuss here: the choice of preferred longitudes of $C$, and the choice of a word representing these longitudes in $N_k \pi_{L}$. 
For the latter point, recall that we fixed here a basing,
meaning in particular that we have a generating set $\{m_i\}$ for $N_k \pi_{L}$.
By \cite[Thm.~4]{Milnor2}, two words representing a given preferred longitude of $C$ may
only differ by terms in $\Gamma_k F(m_i)$, where $F(m_i)$ denotes the free group generated by $\{m_i\}$, 
and/or copies of conjugates of $[m_j,\lambda_j]$, where $\lambda_j$ is a word representing the $j$th preferred longitude of the link $L$. 
Following the proof of Theorem 5 in \cite{Milnor2}, we have that these ambiguities 
are controlled by the indeterminacy $\Delta_{L}(I)$. 
More precisely, we use the fact  that the Magnus expansion $E$ satisfies the following: 
\begin{itemize}
\item  for an element $x\in \Gamma_k F(m_i)$, we have $E(x)=1+\textrm{terms of degree $\ge k$}$; hence  Milnor invariants indexed by a sequence of at most $k$ indices remain unchanged, 
\item  $E([m_j,\lambda_j])-1\in D_i$, where $D_i$ is the ideal introduced in \cite{Milnor2}, that controls the indeterminacy for Milnor invariants indexed by a sequence $I$ ending with $i$  -- see \cite[(14)]{Milnor2}. 
\end{itemize} 
It thus remains to verify the independence under the choice of preferred longitudes for $C$. 
As pointed out above, two choices of preferred $i$th longitude for $C$ may differ by a power of  $\lambda_i$. So 
it suffices to observe that, for any $x\in N_k \pi_{L}$, 
the coefficient of any monomial $X_{i_1}\cdots X_{i_{m}}$ in $E(\lambda_i x)$ is given by 
$$ \underbrace{\big( \textrm{coeff. of $X_{i_1}\cdots X_{i_{m}}$ in $E(\lambda_i)$}\big) }_{=\mu_{L}(i_1\cdots i_{m}i)} +\, \big( \textrm{coeff. of $X_{i_{1}}\cdots X_{i_{m}}$ in $E(x)$\big) }  $$ 
$$ + \sum_{j=1}^{m-1} \big( \underbrace{\textrm{coeff. of $X_{i_1}\cdots X_{i_{j}}$ in $E(\lambda_i)$}}_{=\mu_{L}(i_1\cdots i_{j}i)} \big)  \left( \textrm{coeff. of $X_{i_{j+1}}\cdots X_{i_{m}}$ in $E(x)$}\right), $$
thus proving that\footnote{Notice in particular that the first term in the above sum explains why the definition of the indeterminacy $\overline{\Delta}_{L}(I)$ involves $\mu_{L}(I)$.  }
\begin{eqnarray*}
 \big( \textrm{coeff.\,of\,$X_{i_1}\dotsi X_{i_{m}}$ in $E(\lambda_i x)$}\big)  & & \\
 &\hspace{-1cm}\equiv   \big( \textrm{coeff.\,of\,$X_{i_1}\dotsi X_{i_{m}}$ in $E(x)$}\big)  \textrm{ mod }\overline{\Delta}_{L}(i_1\dotsi i_m i). &
\end{eqnarray*}
This completes the proof.
\end{proof}

\subsection{Some properties}

We now give several properties of Milnor $\overline{\mu}^{(4)}$-invariants. 
In particular, we prove Theorem \ref{thm:mu} in the next three subsections. 

\subsubsection{Concordance invariance.} 

The fact that Milnor $\overline{\mu}^{(4)}$-invariants are surface-concordance invariants 
is an almost immediate consequence of (\ref{diagbis}), as for Artin-type invariants. 

\begin{proof}[Proof of Theorem \ref{thm:mu}~(i)]
Let $C$ and $C'$ be two concordances in $\mathcal{C}(L,L')$ with $C \stackrel{c}{\sim} C'$,  
and let $\alpha_i$ be a preferred $i$th longitude of $C$. 
By moving $\alpha_i$ along a concordance $W$ from $C$ to $C'$, 
we obtain a preferred $i$th longitude $\alpha'_i$ of $C'$. 
Hence the commutative diagram~(\ref{diagbis}) implies that 
$(\iota_{L}^k)^{-1}(l_i)=(\iota_{L}^k)^{-1}(l'_i)$ in $N_k\pi_{L}$, 
where $l_i\in N_k\pi_C$ and $l'_i\in N_k\pi_{C'}$ are elements represented by $\alpha_i$ and $\alpha'_i$, respectively.  
\end{proof}

\subsubsection{Link-homotopy invariance}\label{sec:lhcol2}

Let us now show that Milnor $\overline{\mu}^{(4)}$-invariants indexed by non-repeated sequences are link-homotopy invariants, as stated in Theorem \ref{thm:mu}~(ii). For this, we use the diagramatic approach developed in Section \ref{sec:lhcol}.

\begin{proof}[Proof of Theorem \ref{thm:mu}~(ii)]
By \cite{AMW}, if two elements of $\mathcal{C}(L,L')$ are link-homotopic, there is a finite sequence of Roseman and self-singular Roseman moves relating their surface diagrams. 
By (\ref{eq:R}) and Claim~\ref{claim:col}, any element of $\mathcal{C}^s(L,L')$ arising in this sequence has reduced fundamental group isomorphic to R$\pi_L$. 

We are thus led to consider two elements $S$ and $S'$ in $\mathcal{C}^s(L,L')$, with respective surface diagrams $D_S$ and $D_{S'}$, that are related by a single Roseman or self-singular Roseman move, and such that  $\textrm{R}\pi_{S}\cong \textrm{R}\pi_{S'}\cong \textrm{R}\pi_{L}$. \\
Let $B$ be a 3-ball where this local move is supported. 
Pick a preferred $i$th longitude $\alpha^S_i$ of $S$, which represents an element 
$l^S_i$ in R$\pi_{L}$. Suppose that $D_S$ is given an $R$-coloring (see Definition \ref{def:Rcol}).

If $D_S$ and $D_{S'}$  are related by a (usual) Roseman move, 
we can freely assume, up to isotopy, that $\alpha^S_i$ is disjoint from $B$. 
This gives a preferred $i$th longitude $\alpha'^S_i$ of $S$, which is disjoint from the 3-ball $B$, and which yields an $i$th longitude of $S'$ representing the same element $l^S_i\in \textrm{R}\pi_{L}$. 

Now, if $D_S$ and $D_{S'}$  are related by a self-singular Roseman move, 
the longitude  $\alpha^S_i$ may intersect the 3-ball $B$, 
and moving it away from the interior of $B$ may involve passing through a self-singular point; we denote by $\alpha'^S_i$ the resulting preferred $i$th longitude. 
Using the R-coloring of $D_S$,  we can express $l^S_i$ as $u_{i1}^{\varepsilon_1}  u_{i2}^{\varepsilon_2}\cdots u_{ir_i}^{\varepsilon_{r_i}}$, where the $u_{ij}$'s are the colors in R$\pi_l$ of the successive \lq overpassing sheets\rq~ intersecting $\alpha^S_i$ transversally, with intersection number $\varepsilon_{ij}$. 
By Claim~\ref{claim:col}, $\alpha'^S_i$ then represents an element $l'^S_i$ in $R\pi_{L}$ which is equal to 
either $l^S_i$  or 
$$ l'^S_i = v(g^{-1} m_i g) w=vw\left((gw)^{-1} m_i g w\right), $$ 
 for some element $g$ of $R\pi_{L}$, 
where $v=\prod_{i=1}^k u_{ik}^{\varepsilon_k}$ and $v=\prod_{i=k+1}^{r_i} u_{ik}^{\varepsilon_k}$ for some $k$. 

Summarizing, for two link-homotopic elements $C,C'\in \mathcal{C}(L,L')$, we have: 
\begin{claim}
For an element $l_i\in R\pi_{L}$ given by a preferred $i$th longitude of $C$, 
there is an element $l'_i$ given by a preferred $i$th longitude  
of $C'$ such that $l'_i$ is obtained from $l_i$ by multiplying by  
several conjugates of $m_i$.  
\end{claim}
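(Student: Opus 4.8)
The plan is to establish the Claim by carefully tracking how a preferred $i$th longitude changes under the sequence of Roseman and self-singular Roseman moves relating surface diagrams of $C$ and $C'$, using the R-coloring machinery developed in Section \ref{sec:lhcol}. The key observation is that the statement is \emph{local}: it suffices to analyze a single move, since one can then compose the resulting modifications. For an ordinary Roseman move, one isotopes the longitude out of the supporting $3$-ball $B$, so its homotopy class in $\textrm{R}\pi_L$ (identified via Claim \ref{claim:col}) is genuinely unchanged. The only interesting case is a self-singular Roseman move, where the longitude may be forced to cross the singular sheet, and this is where the conjugates of $m_i$ enter.

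First I would set up the local picture: fix the $3$-ball $B$, a preferred $i$th longitude $\alpha$ for $S$, and a chosen R-coloring of $D_S$. Using the Wirtinger-type description, I would write the element $l_i\in\textrm{R}\pi_L$ represented by $\alpha$ as a product $u_1^{\varepsilon_1}\cdots u_{r}^{\varepsilon_{r}}$, where the $u_j$ are the colors of the overpassing sheets that $\alpha$ meets transversally in order. Next I would isotope $\alpha$ off the interior of $B$; the only way this forces a change is when $\alpha$ is pushed across a sheet associated with a self-intersection of the $i$th component. Since both sheets at such a self-intersection carry colors that are conjugates of the same meridian $m_i$ (this is exactly the content of the R-coloring relation together with the fact that a self-singular point occurs on the $i$th component), the word $l_i$ is modified by inserting a factor of the form $g^{-1}m_i^{\pm1}g$ at the position in the product corresponding to where $\alpha$ crosses. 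Writing $l_i = v\cdot w$ with $v = u_1^{\varepsilon_1}\cdots u_k^{\varepsilon_k}$ and $w = u_{k+1}^{\varepsilon_{k+1}}\cdots u_r^{\varepsilon_r}$, the new longitude represents $v(g^{-1}m_i g)w = vw\bigl((gw)^{-1}m_i\,gw\bigr)$, which is $l_i$ right-multiplied by a single conjugate of $m_i$. (The sign of the exponent is handled identically, and an inverse conjugate of $m_i$ is again a conjugate of $m_i^{-1}$, which is absorbed in the same way.)

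Finally I would iterate: composing over the finite sequence of moves, the element $l_i$ associated to a preferred $i$th longitude of $C$ is transformed into an element $l'_i$ associated to a preferred $i$th longitude of $C'$ by right-multiplication by finitely many conjugates of $m_i$, which is exactly the assertion of the Claim. I would also remark that the result remains a \emph{preferred} longitude throughout, since crossing a sheet colored by a conjugate of $m_i$ does not change the homology class in the complement of the $i$th component (the linking number contribution of $m_i$ with its own component vanishes). The main obstacle I anticipate is the bookkeeping at the self-singular Roseman move itself: one must verify that, among the three types of self-singular Roseman moves from \cite{AMW}, the only one that affects the longitude does so precisely by the insertion of a conjugate of $m_i$, and that the branch/triple-point cases either leave $\alpha$ untouched or reduce to this analysis. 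Handling the orientation and sign conventions consistently — so that what is inserted is always a conjugate of $m_i^{\pm1}$ rather than of some other meridian — is the delicate point, but it follows from the defining relation of a surface-diagram coloring together with the fact that self-intersections occur within a single component.
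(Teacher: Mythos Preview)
Your proposal is correct and follows essentially the same approach as the paper: reduce to a single (self-singular) Roseman move supported in a $3$-ball $B$, write $l_i$ as the product $u_1^{\varepsilon_1}\cdots u_r^{\varepsilon_r}$ of colors of overpassing sheets, observe that for an ordinary Roseman move the longitude can be isotoped off $B$, and that for a self-singular move pushing the longitude through the self-intersection inserts a factor $g^{-1}m_i g$ yielding $v(g^{-1}m_i g)w = vw\bigl((gw)^{-1}m_i(gw)\bigr)$, then iterate. Your additional remark on why the longitude remains \emph{preferred} (the inserted conjugate of $m_i$ contributes zero linking with the $i$th component) is a point the paper leaves implicit.
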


Moreover, recall from \cite[\S~6]{Milnor} that $R\pi_{L}$ has a presentation 
with generating set $\{m_1,...,m_n\}$ and  relation set
\[U=\{[m_j,\lambda_j],[m_j, x^{-1} m_j x]~|~x\in \langle m_1,...,m_n\rangle,~j=1,...,n\}. \]
Hence a representative of $l'_i$ is obtained from one of $l_i$ by multiplying  
by several conjugates of $m_i$ and/or of elements in $U$. 
Now, for $x,y\in \langle m_1,...,m_n\rangle$, the Magnus expansion $E$ satisfies the following: 
\begin{itemize}
\item  $E(x^{-1} m_i x)$ is of the form $1+\left( \textrm{terms containing $X_i$}\right)$, 
\item  $E(y^{-1} [m_j, x^{-1} m_j x] y)$ is of the form $1+$(repeated terms), 
where a monomial $X_{i_1}\cdots X_{i_m}$ is called repeated if $i_j=i_k$ for some $j\neq k$, 
\item  $E(y^{-1} [m_j,\lambda_j] y)-1 \in D_i$, as in the proof of Theorem \ref{thm:invmu}. 
\end{itemize}

Hence, for any non-repeated sequence $j_1\cdots j_si$, we have 
\[\left( \textrm{coeff. of $X_{j_1}\! \cdots\! X_{j_s}$ in $E(l_i)$}\right) \equiv 
\left( \textrm{coeff. of $X_{j_1}\! \cdots\!  X_{j_s}$ in $E(l'_i)$}\right) \textrm{ mod }\overline{\Delta}_L(j_1\! \cdots\!  j_s i). \]
This completes the proof. 
\end{proof}

\subsubsection{Link-homotopy classification: proof of Theorem \ref{thm:mu}~(iii)}

We now show how to use Theorem \ref{thm:linkhombis2} to classify concordances 
up to link-homotopy by $\overline{\mu}^{(4)}$-invariants, in the slice case. 
Recall from Remark \ref{rem:ark} that, in the slice case, the integers $\mu^{(4)}(I)$ are (ambient isotopy fixing the boundary) invariants for all sequences $I$. 
\begin{lemma}\label{lem:phimu}
Let $C,C'$ be two concordances in $\mathcal{C}(L,L')$. 
Suppose that $L$ is a slice link.  Then  $\varphi_{\textrm{R}}^{L,L'}(C)=\varphi_{\textrm{R}}^{L,L'}(C')$ if and only if 
all Milnor $\mu^{(4)}$-invariants indexed by non-repeated sequences coincide for $C$ and $C'$.  
\end{lemma}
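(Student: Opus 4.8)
The forward implication is immediate from results already established. If $\varphi_{\textrm{R}}^{L,L'}(C)=\varphi_{\textrm{R}}^{L,L'}(C')$, then $C\stackrel{lh}{\sim}C'$ by Theorem \ref{thm:linkhombis2} (note that $L'$ is slice, being concordant to the slice link $L$), and hence all Milnor $\mu^{(4)}$-invariants indexed by non-repeated sequences coincide for $C$ and $C'$ by Theorem \ref{thm:mu}~(ii). So the actual content of the lemma is the converse, which I would prove algebraically, at the level of longitudes.

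First I would set up the translation. Since $L$ and $L'$ are slice, the identifications $\textrm{R}\pi_L\cong \textrm{R}\pi_{L'}\cong \textrm{R}F_n$ coming from (\ref{eq:R}) carry the meridian systems $\{m_i\}$ and $\{m'_i\}$ to free generating sets, and $\varphi_{\textrm{R}}^{L,L'}(C)$ is the automorphism $m'_i\mapsto l_i^{-1}m_il_i$ of $\textrm{R}F_n$, where $l_i$ is the image of an $i$th longitude of $C$ (and $l'_i$ the analogous element for $C'$). Moreover, for a slice link the preferred longitude $\lambda_i$ of $L$ is trivial in $\textrm{R}\pi_L$ (all its Milnor numbers vanish, so it lies in $\Gamma_k$ for all $k$, hence dies in the nilpotent group $\textrm{R}\pi_L$), so by (\ref{claim:longitude}) the element $l_i$ differs from the element coming from a \emph{preferred} $i$th longitude only by a power of $m_i$. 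Two consequences: (a) since multiplying $l_i$ by a power of $m_i$ only affects coefficients of monomials involving $X_i$, and since a non-repeated sequence $j_1\cdots j_s i$ has $j_1,\dots,j_s,i$ pairwise distinct, the invariant $\mu^{(4)}_C(j_1\cdots j_s i)$ is exactly the coefficient of $X_{j_1}\cdots X_{j_s}$ in $E(l_i)$, read off unambiguously; and (b) $m_i^{\pm 1}$ commutes with all its conjugates in $\textrm{R}F_n$, so $\varphi_{\textrm{R}}^{L,L'}(C)=\varphi_{\textrm{R}}^{L,L'}(C')$ holds if and only if, for every $i$, the element $l_i(l'_i)^{-1}$ lies in the centralizer of $m_i$ in $\textrm{R}F_n$.

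Now I would prove that agreement of non-repeated $\mu^{(4)}$-invariants forces $l_i(l'_i)^{-1}$ to centralize $m_i$ for each $i$. Here I would reuse the two elementary facts about $E$ recorded in the proof of Theorem \ref{thm:mu}~(ii): for any word $x$ in the $m_j$, $E(x^{-1}m_ix)=1+(\textrm{terms containing }X_i)$, so every element $h$ of the normal closure $\langle\langle m_i\rangle\rangle$ satisfies $E(h)=1+(\textrm{terms containing }X_i)$; consequently, for any $g$ and any $h\in\langle\langle m_i\rangle\rangle$, the coefficient in $E(gh)$ of a non-repeated monomial not involving $X_i$ equals that in $E(g)$, since the only $X_i$-free part of $E(h)$ is its constant term $1$. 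Hence the non-repeated $X_i$-free coefficients of $E(l_i)$ depend only on the class $\bar l_i$ of $l_i$ in $\textrm{R}F_n/\langle\langle m_i\rangle\rangle\cong \textrm{R}F(m_1,\dots,\widehat{m_i},\dots,m_n)\cong \textrm{R}F_{n-1}$, and in fact coincide with the coefficients of the reduced Magnus expansion of $\bar l_i$ in the variables $\{X_j\}_{j\neq i}$ (non-repeated sequences $j_1\cdots j_s i$ being in bijection with non-repeated monomials in those variables). The hypothesis thus says that $\bar l_i$ and $\bar l'_i$ have the same reduced Magnus expansion in $\textrm{R}F_{n-1}$; since the reduced Magnus expansion is injective on $\textrm{R}F_{n-1}$ (this is standard, cf. \cite{Milnor,AMW}, and is the algebraic heart of link-homotopy classification), we get $\bar l_i=\bar l'_i$, i.e. $l_i(l'_i)^{-1}\in\langle\langle m_i\rangle\rangle$, which lies in the centralizer of $m_i$. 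By (b), $\varphi_{\textrm{R}}^{L,L'}(C)=\varphi_{\textrm{R}}^{L,L'}(C')$, completing the proof.

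The step I expect to require the most care is the purely group-theoretic input: the identification $\textrm{R}F_n/\langle\langle m_i\rangle\rangle\cong \textrm{R}F_{n-1}$ and the injectivity of the reduced Magnus expansion on the reduced free group, together with the compatibility of the isomorphisms $\textrm{R}\pi_L\cong\textrm{R}F_n$ with the chosen meridians and with the distinction between a preferred longitude and the longitude appearing in the conjugation formula (\ref{eq:conjug}). Both algebraic facts are known and implicit in \cite{Milnor,AMW}, but I would want to either cite them sharply or include the short self-contained verifications; the remaining coefficient bookkeeping is entirely of the kind already carried out in the proofs of Theorems \ref{thm:invmu} and \ref{thm:mu}~(ii).
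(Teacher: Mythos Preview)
Your argument is correct, and for the converse direction it rests on the same algebraic fact as the paper's proof (injectivity of the reduced Magnus expansion on the reduced free group), but you package it differently. The paper proves both implications simultaneously as a chain of equivalences: $\varphi_{\textrm{R}}^{L,L'}(C)=\varphi_{\textrm{R}}^{L,L'}(C')$ iff $[m_i,l_i(l'_i)^{-1}]=1$ in $\textrm{R}\pi_L\cong\textrm{R}F_n$, iff $E([m_i,l_i(l'_i)^{-1}])=1+(\text{repeated terms})$, iff $E(l_i)$ and $E(l'_i)$ agree modulo $X_i$--terms and repeated terms. Your converse instead passes to the quotient $\textrm{R}F_n/\langle\langle m_i\rangle\rangle\cong\textrm{R}F_{n-1}$ and applies injectivity there; this is equivalent, and arguably more transparent, since it explains \emph{why} the $X_i$--terms disappear rather than computing with the commutator directly.

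The one genuine organizational difference is your forward direction: you invoke Theorem~\ref{thm:linkhombis2} (the full link-homotopy classification by $\varphi_{\textrm{R}}$, which imports the main result of \cite{AMW}) together with Theorem~\ref{thm:mu}~(ii). This is logically sound and non-circular, since both are proved independently of this lemma, but it is heavier than necessary. The paper's direct algebraic chain handles the forward direction at no extra cost and keeps the lemma self-contained---which is natural, since the lemma is then \emph{combined} with Theorem~\ref{thm:linkhombis2} to deduce Theorem~\ref{thm:mu}~(iii). Your route makes the forward implication look like a corollary of a deeper theorem when in fact it is just the same Magnus-expansion bookkeeping run in reverse.
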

\begin{proof}
The fact that $\varphi_{\textrm{R}}^{L,L'}(C)=\varphi_{\textrm{R}}^{L,L'}(C')$ 
means that, for each $i$, there are preferred $i$th longitudes for $C$ and $C'$, represented by elements $l_i$ and $l'_i$ in R$\pi_L$, such that
$l_i^{-1}m_i l_i=(l'_i)^{-1} m_i l'_i$ in  R$\pi_{L}$. This is equivalent to $[m_i, l_i(l'_i)^{-1}]=1\in \textrm{R}\pi_{L}$. 
Since $L$ is a slice link,  $\textrm{R}\pi_{L}$ is isomorphic to the reduced free group R$F_n$.
It is known (see e.g. \cite[Thm.~7.11]{yura}) that $g$ is a representative of $1\in \textrm{R}F_n$ if and only if 
$E(g)$ is of form $1+\left( \textrm{repeated terms}\right)$. 
It follows easily that $\varphi_{\textrm{R}}^{L,L'}(C)=\varphi_{\textrm{R}}^{L,L'}(C')$ if and only if 
\[E(l_i)=E(l'_i)+\left( \textrm{terms containing $X_i$}\right)+\left( \textrm{repeated terms}\right),\]
which is equivalent to having same Milnor $\mu^{(4)}$-invariants indexed by non-repeated sequences. 
 \end{proof}

By combining Theorem \ref{thm:linkhombis2} and Lemma \ref{lem:phimu}, we obtain 
that, if $L$ and $L'$ are slice links, then 
Milnor ${\mu}^{(4)}$-invariants 
indexed by non-repeated sequences form a complete link-homotopy invariant for $\mathcal{C}(L,L')$.
This proves Theorem \ref{thm:mu}~(iii).  
\begin{remark}
It is known (see e.g. \cite[Cor.~4.4.5]{Fenn}) that $g$ is a representative of $1\in N_kF_n$ if and only if 
$E(g)$ is of form $1+\left( \textrm{terms of degree $\geq k$}\right)$. 
This implies that the proof of Lemma \ref{lem:phimu} can be easily adapted to show the following. 
Let $C,C'$ be two concordances in $\mathcal{C}(L,L')$, where $L$ is a slice link.  Then 
$\varphi_k^{L,L'}(C)=\varphi_k^{L,L'}(C')$ if and only if all Milnor $\overline{\mu}^{(4)}$-invariants of length at most $k$ coincide for $C$ and $C'$. 
\end{remark}

\subsubsection{Additivity}

Let $L,L',L''$ be three concordant links, with given systems of meridians. 
We have the following additivity property. 
\begin{proposition}\label{lem:add}
Let $C\in\mathcal{C}(L,L')$ and $C'\in\mathcal{C}(L',L'')$ such that all Milnor $\overline{\mu}^{(4)}$-invariants of $C$, resp. $C'$, of length $\leq l$, resp. $\leq l'$, vanish.  
Then $$\overline{\mu}^{(4)}_{C\cdot C'}(I)\equiv \overline{\mu}^{(4)}_{C}(I)+\overline{\mu}^{(4)}_{C'}(I) \textrm{ mod }\overline{\Delta}_{L}(I) $$ for all sequences $I$ of length $\leq l+l'$.  
\end{proposition}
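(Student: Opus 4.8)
The plan is to compute the Magnus expansion of a preferred longitude of the stacking product $C\cdot C'$ in terms of those of $C$ and $C'$, and to observe that the low-degree vanishing hypothesis kills all cross-terms. First I would fix a basing for $L$, $L'$ and $L''$, and choose preferred $i$th longitudes $\alpha_i$ of $C$ and $\alpha'_i$ of $C'$, representing elements $l^0_i\in N_k\pi_L$ and $l'^0_i\in N_k\pi_{L'}$ respectively (with $k$ chosen larger than $l+l'$). By definition of the stacking product, concatenating $\alpha_i$ with $\alpha'_i$ along the common boundary $L'$ gives a preferred $i$th longitude $\alpha''_i$ of $C\cdot C'$. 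Under the Stallings isomorphisms of (\ref{eq:Nk}) and the identification $N_k\pi_L\simeq N_k\pi_{L'}$ induced by $\varphi^{L,L'}_k(C)$, the element represented by $\alpha''_i$ in $N_k\pi_L$ is, up to the longitude-ambiguity controlled in the proof of Theorem~\ref{thm:invmu}, the product $l^0_i\cdot \varphi^{L,L'}_k(C)^{-1}(l'^0_i)$ — i.e. one writes the word for $l'^0_i$ in the meridians $m'_j$ and substitutes each $m'_j$ by the conjugate $\varphi_{C,k}(m'_j)=l_j^{-1}m_jl_j$ of $m_j$ given by (\ref{eq:conjug}).

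Next I would take Magnus expansions. Writing $E(l^0_i)=1+\Lambda_i$ and $E(\varphi^{L,L'}_k(C)^{-1}(l'^0_i))=1+\Lambda'_i$, the hypothesis that all $\overline\mu^{(4)}$-invariants of $C$ of length $\le l$ vanish means, modulo the relevant indeterminacy ideals, that $\Lambda_i$ consists only of terms of degree $\ge l$ (plus terms living in the indeterminacy); similarly $\Lambda'_i$ has only terms of degree $\ge l'$, since passing from $l'^0_i$ to its image under $\varphi_{C,k}^{-1}$ replaces the meridians $m'_j$ by conjugates $l_j^{-1}m_jl_j$, and — exactly as in the computations in the proofs of Theorems~\ref{thm:invmu} and~\ref{thm:mu}(ii) — this conjugation changes the Magnus expansion only by monomials that are either of degree $\ge l'$ or lie in the indeterminacy. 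Then $E(l^0_i\cdot \varphi_{C,k}^{-1}(l'^0_i)) = (1+\Lambda_i)(1+\Lambda'_i) = 1+\Lambda_i+\Lambda'_i+\Lambda_i\Lambda'_i$, and the product term $\Lambda_i\Lambda'_i$ has all monomials of degree $\ge l+l'$, hence contributes nothing to coefficients of monomials $X_{i_1}\cdots X_{i_m}$ with $m+1\le l+l'$, i.e. to $\overline\mu^{(4)}$-invariants of length $\le l+l'$. Reading off the coefficient of $X_{i_1}\cdots X_{i_m}$ for a sequence $I=i_1\cdots i_m i$ of length $\le l+l'$ therefore gives $\mu^{(4)}_{C\cdot C'}(I)\equiv \mu^{(4)}_C(I)+\mu^{(4)}_{C'}(I)$, and reducing modulo $\overline\Delta_L(I)=\overline\Delta_{L'}(I)=\overline\Delta_{L''}(I)$ yields the claim. (One should also note that $\overline\mu^{(4)}_{C'}(I)$ computed from $l'^0_i$ agrees with the one computed from $\varphi_{C,k}^{-1}(l'^0_i)$ modulo $\overline\Delta_L(I)$, which is again the conjugation computation just cited.)

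The main obstacle I expect is bookkeeping the interaction between the two sources of slack: the degree shifts coming from the vanishing hypotheses, and the indeterminacy ideals $D_i$ and $\Delta_L(I)$ coming from the non-uniqueness of preferred longitudes and of words representing them. One has to be careful that substituting conjugates of meridians for the $m'_j$ does not spoil the degree estimate on $\Lambda'_i$ except through terms that are already in the indeterminacy — this is precisely the content of the bullet-point computations in the proof of Theorem~\ref{thm:invmu} (the behaviour of $E$ on $\Gamma_kF$ and on $[m_j,\lambda_j]$), so it can be quoted rather than redone. Once that is in hand, the cross-term vanishing is the elementary observation that degree is additive under multiplication of power series, and the proof is complete. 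I would therefore present it as: (1) identify the longitude of $C\cdot C'$ as the appropriate product in $N_k\pi_L$; (2) invoke the indeterminacy analysis of Theorem~\ref{thm:invmu} to reduce to honest low-degree statements about $\Lambda_i$ and $\Lambda'_i$; (3) multiply the two Magnus expansions and discard the degree-$\ge l+l'$ cross term.
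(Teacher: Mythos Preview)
Your proof is correct and follows essentially the same route as the paper's: express the preferred $i$th longitude of $C\cdot C'$ in $N_k\pi_L$ as the product of $l^0_i$ with the $\varphi_{C,k}$-image of $l'^0_i$, take Magnus expansions, and use the two vanishing hypotheses to see that the cross term has degree $\ge l+l'$ while the substitution leaves the degree-$<l+l'$ part of $E(l'^0_i)$ unchanged. One small notational slip to fix: you repeatedly write $\varphi^{L,L'}_k(C)^{-1}$ where you mean $\varphi_{C,k}$ itself (your own description of the substitution, replacing each $m'_j$ by $\varphi_{C,k}(m'_j)=l_j^{-1}m_jl_j$, is the correct operation, and $\varphi_{C,k}$ already goes from $N_k\pi_{L'}$ to $N_k\pi_L$).
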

\begin{proof}
Denote by $\{m_i\}$ and $\{m'_i\}$ a generating system for $N_k \pi_{L}$ and $N_k \pi_{L'}$, respectively, 
given by the systems of meridians. 
For all $j=1,\cdots,n$, denote by $w_j$, resp. $w'_j$, a word in $\{m_i\}$, resp. $\{m'_i\}$, 
representing the image in $N_k \pi_{L}$, resp. in $N_k \pi_{L'}$, 
of a preferred $j$th longitude of $C$, resp. $C'$. 

For a sequence $I$ with final index $j$, Milnor invariant $\overline{\mu}^{(4)}(I)$ of $C\cdot C'$ 
is computed from the Magnus expansion $E$ of a word $W_j$ representing the image in 
$N_k \pi_{L'}$ of a $j$th preferred longitude of $C\cdot C'$.  
We can take $E(W_j)=E(w_j)\cdot E(\tilde{w'_j})$, where 
$E(\tilde{w'_j})$ is obtained from $E(w'_j)$ by substituting 
$E(w_i^{-1} m_i w_i)-1$ for $X_i$, for each $i$.
Since all terms of degree $<l$ in $E(w_i)$ are zero, for each $i$, we have that 
 $$E\left(w_i^{-1} m_i w_i \right) - 1 = X_i + \textrm{ (terms of degree $\ge l$)}. $$
Moreover, since the Magnus expansion of $w'_j$ is the form $E(w'_j)= 1 +($terms of order $\ge l')$, we have that 
  $E(\tilde{w'_j}) = 1 + \sum_{d=l}^{l+l'-1} E_{d}(w'_j) + \left(\textrm{terms of degree $\ge (l+l')$}\right)$,  
where $E_d(x)$ denotes the degree $d$ part of the Magnus expansion of a word $x$.
  It follows that 
  $$  E(W_j) = 1 + \sum_{d<l+l'}\left( E_{d}(w_j) + E_{d}(w'_j) \right) +
\left(\textrm{terms of degree $\ge (l+l')$}\right). $$
Since degree $(l+l')$ terms in $E(W_j)$ contribute to length $(l+l'+1)$ invariants,  this completes the proof. 
\end{proof}

\subsection{Some applications}\label{sec:appli}

In this final section, we give two rather simple applications of our $4$-dimensional  Milnor invariants.

For this, we use the language of welded knotted objects, 
which are a diagrammatic generalization of usual knot diagrams: 
these are planar immersions of some $1$-manifold with transverse double points, which are labeled either as classical crossings 
(as for usual diagrams) or virtual crossings, regarded up to an extended family of Reidemeister moves.  
See Figure \ref{fig:AB} for some examples.
Satoh defined in \cite{Satoh} the Tube map, which turns welded knotted objects into knotted surfaces in $4$-space. 
In particular, the Tube map turns welded string links -- and in particular, welded pure braids -- into elements of $\mathcal{C}(O_n)$; see \cite{ABMW} for details. 
We will not review the definition of this map here, but refer the reader to \cite{Audoux} for a good introduction. 

\subsubsection{More on surface-concordance vs. link-homotopy}\label{sec:more} 

The result of \cite{BT} used in the proof of Lemma \ref{lem:inverse}, saying that concordance implies link-homotopy, gives a surjective group homomorphism 
 $$ \bfrac{\mathcal{C}(L)}{\stackrel{c}{\sim}} \longrightarrow \bfrac{\mathcal{C}(L)}{\stackrel{\small{lh}}{\sim}}. $$
\noindent (More generally, we have a surjective map $  \bfrac{\mathcal{C}(L,L')}{\stackrel{c}{\sim}} \rightarrow \bfrac{\mathcal{C}(L,L')}{\stackrel{\small{lh}}{\sim}}$ 
for two concordant links $L$ and $L'$.) 
We now observe that this epimorphism is not injective in general, 
by exhibiting a nontrivial element of $\bfrac{\mathcal{C}(L)}{\stackrel{c}{\sim}}$ which becomes trivial up to link-homotopy. 

Consider the $2$-component welded pure braids $A$ and $B$ shown on the left-hand side of Figure \ref{fig:AB}. 
Their images by the Tube map, which we still denote by $A$ and $B$, are elements of $\mathcal{C}(O_2)$. 

Recall that, for elements of $\mathcal{C}(O_2)$, all indeterminacies $\overline{\Delta}(I)$ vanish, so that $\mu^{(4)}$-invariants are well-defined over the integers. 
A straightforward computation, using the techniques of \cite{ABMW}, gives that 
\begin{equation}\label{eq:12}
\mu^{(4)}_{A\cdot B}(12)=\mu^{(4)}_{
B\cdot A}(12)=\mu^{(4)}_{A\cdot B}(21)=\mu^{(4)}_{
B\cdot A}(21)=1, 
\end{equation}
while   
\begin{equation}\label{eq:122}
\mu^{(4)}_{A\cdot B}(122)=0\quad \textrm{and} \quad \mu^{(4)}_{B\cdot A}(122)=1. 
\end{equation}
 
By Theorem \ref{thm:mu}~(iii), $\mu^{(4)}(12)$ and $\mu^{(4)}(21)$ form a complete set of link-homotopy invariants for $\mathcal{C}(O_2)$. Hence by (\ref{eq:12}), $A\cdot B$ and $B\cdot A$ are link-homotopic. 
But these two elements are not concordant as a consequence of (\ref{eq:122}) and Theorem \ref{thm:mu}~(i).

\begin{figure}[!h]
\includegraphics[scale=0.85]{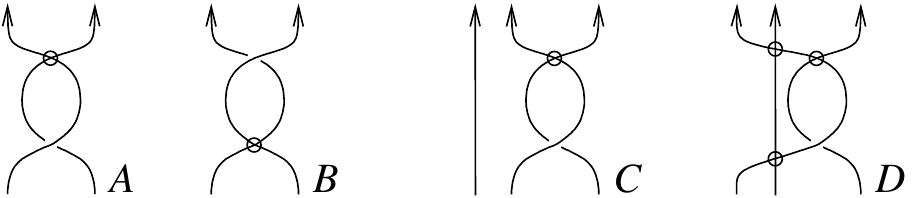}
\caption{The welded pure braids $A$, $B$, $C$ and $D$.} \label{fig:AB}
\end{figure}

\subsubsection{Non abelian group structures}

We showed in Proposition \ref{thm:groups} that, for any link $L$, the quotient monoids 
$\bfrac{\mathcal{C}(L)}{\stackrel{c}{\sim}}$,   
$\bfrac{\mathcal{C}(L)}{\stackrel{\small{lh}}{\sim}}$  and $\bfrac{\mathcal{C}^s(L)}{\stackrel{\small{lh}}{\sim}}$
are all groups.
We now oberve that, even in the simplest case where $L$ is slice, these groups are non-abelian. More precisely, we have the following.
\begin{proposition}\label{proposition1}
For an $n$-component slice link  $L$ ($n\ge 2$),  the group $\bfrac{\mathcal{C}(L)}{\stackrel{c}{\sim}}$ is non-abelian. 
\end{proposition}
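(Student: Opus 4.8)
The plan is to exhibit two explicit embedded concordances in $\mathcal{C}(L)$ that fail to commute, and to detect the noncommutativity via the $\overline{\mu}^{(4)}$-invariants — which, since $L$ is slice, are honest integer-valued invariants by Remark~\ref{rem:ark}. First I would reduce to the case $L=O_n$. Indeed, by Proposition~\ref{thm:groups}~(ii) (or Lemma~\ref{thm:iso}, using a chosen concordance $X\in\mathcal{C}(O_n,L)$), the group $\bfrac{\mathcal{C}(L)}{\stackrel{c}{\sim}}$ is isomorphic to $\bfrac{\mathcal{C}(O_n)}{\stackrel{c}{\sim}}$, so it suffices to produce a non-commuting pair there. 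Since any $n$-component slice link has $n\ge 2$, and since the first two components already carry the obstruction, I would further reduce to $n=2$ by working with concordances supported on two strands (the remaining strands carrying the trivial product concordance), using the additivity/stacking formalism and the fact that a split sublink gives a subgroup-type statement.

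The concrete input is the pair $A,B\in\mathcal{C}(O_2)$ already introduced in Section~\ref{sec:more} (Figure~\ref{fig:AB}), obtained as $\Tube$ of welded pure braids. The computations recorded there, namely
\[
\mu^{(4)}_{A\cdot B}(122)=0\quad\textrm{and}\quad \mu^{(4)}_{B\cdot A}(122)=1,
\]
show immediately that $A\cdot B$ and $B\cdot A$ are not equal in $\bfrac{\mathcal{C}(O_2)}{\stackrel{c}{\sim}}$, because by Theorem~\ref{thm:mu}~(i) the quantity $\mu^{(4)}(122)$ is a surface-concordance invariant (well-defined over $\modZ$ since $O_2$ is slice, with no indeterminacy). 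Hence $\bfrac{\mathcal{C}(O_2)}{\stackrel{c}{\sim}}$ is non-abelian, and transporting back along the isomorphism $\bfrac{\mathcal{C}(O_2)}{\stackrel{c}{\sim}}\simeq\bfrac{\mathcal{C}(L)}{\stackrel{c}{\sim}}$ (after the embedding of $\mathcal{C}(O_2)$ into $\mathcal{C}(O_n)$ as concordances on two marked components) yields the claim for all $n\ge 2$.

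The main obstacle is the bookkeeping of the reduction steps rather than any deep idea: one must be careful that "adding trivial product concordances on the extra strands" really gives a group embedding $\bfrac{\mathcal{C}(O_2)}{\stackrel{c}{\sim}}\hookrightarrow\bfrac{\mathcal{C}(O_n)}{\stackrel{c}{\sim}}$ compatible with the $\overline{\mu}^{(4)}$-invariants — this follows from the additivity (Proposition~\ref{lem:add}) together with the naturality of the Artin-type maps $\varphi^{O_n}_k$ under forgetting strands, but it needs a sentence of justification. I would also double-check that the specific braids $A,B$ chosen do give embedded (not merely self-singular) concordances, which is automatic since they are $\Tube$ of genuine welded \emph{pure braids}. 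Everything else — the two displayed Magnus-coefficient computations — is the routine calculation already cited to \cite{ABMW} and need not be reproduced.
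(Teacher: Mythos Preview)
Your proposal is correct and follows essentially the same approach as the paper: use the explicit elements $A,B\in\mathcal{C}(O_2)$ from Figure~\ref{fig:AB}, invoke the computation $\mu^{(4)}_{A\cdot B}(122)\neq\mu^{(4)}_{B\cdot A}(122)$ together with Theorem~\ref{thm:mu}~(i) to conclude that $\bfrac{\mathcal{C}(O_n)}{\stackrel{c}{\sim}}$ is non-abelian, and then transport to arbitrary slice $L$ via Proposition~\ref{thm:groups}~(ii). The paper's write-up is terser, handling the passage from $O_2$ to $O_n$ in a single clause (the invariant $\mu^{(4)}(122)$ only involves the first two indices, so adding trivial product strands changes nothing); your appeal to Proposition~\ref{lem:add} for this step is not quite the right citation---that proposition concerns stacking rather than split unions---but the underlying fact is elementary and your argument is otherwise on target.
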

\begin{proof}
Consider again the welded braids $A$ and $B$ of Figure \ref{fig:AB}. 
The computation in (\ref{eq:122}) readily implies that $\bfrac{\mathcal{C}(O_n)}{\stackrel{c}{\sim}}$ is not abelian for any $n\ge 2$.  
By Proposition \ref{thm:groups}~(ii), this holds more generally for $\bfrac{\mathcal{C}(L)}{\stackrel{c}{\sim}}$ for any slice link $L$. 
\end{proof}
As noted above, (\ref{eq:12}) shows that $A$ and $B$ commute up to link-homotopy. 
As a matter of fact, one can show that $\bfrac{\mathcal{C}(O_n)}{\stackrel{lh}{\sim}}$ is an abelian group for $n=2$ (this essentially follows from \cite[Prop.~4.11]{ABMW}). 
But this is no longer true for $n\ge 3$: 
\begin{proposition}\label{proposition2}
For an $n$-component slice link $L$ ($n\ge 3$), the groups $\bfrac{\mathcal{C}(L)}{\stackrel{lh}{\sim}}$ and $\bfrac{\mathcal{C}^s(L)}{\stackrel{\small{lh}}{\sim}}$ are non-abelian. 
\end{proposition}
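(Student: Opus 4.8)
The plan is to reduce the statement, via Proposition \ref{thm:groups}~(ii) and (iii), to the single case $L=O_n$ (it suffices to show $\bfrac{\mathcal{C}(O_n)}{\stackrel{lh}{\sim}}$ is non-abelian, since $\bfrac{\mathcal{C}(L)}{\stackrel{lh}{\sim}}\simeq \bfrac{\mathcal{C}(O_n)}{\stackrel{lh}{\sim}}$ for any slice $L$, and $\bfrac{\mathcal{C}(O_n)}{\stackrel{lh}{\sim}}$ is a quotient of $\bfrac{\mathcal{C}(L)}{\stackrel{c}{\sim}}$ for any $L$, hence non-abelianity is inherited by $\bfrac{\mathcal{C}^s(O_n)}{\stackrel{lh}{\sim}}$ too via the inclusion $\mathcal{C}(O_n)\subset \mathcal{C}^s(O_n)$). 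Moreover it is enough to handle $n=3$: adding trivial extra components (taking the stacking product with $O_{n-3}\times[0,1]$ on the unused strands) embeds $\bfrac{\mathcal{C}(O_3)}{\stackrel{lh}{\sim}}$ into $\bfrac{\mathcal{C}(O_n)}{\stackrel{lh}{\sim}}$ in a way that preserves the relevant $\mu^{(4)}$-invariants.

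Next I would exhibit two explicit elements of $\mathcal{C}(O_3)$ that fail to commute up to link-homotopy, using the welded pure braids $C$ and $D$ already drawn in Figure \ref{fig:AB}: set $C, D$ (abusively) for their images under the Tube map. By Theorem \ref{thm:mu}~(iii), link-homotopy classes in $\mathcal{C}(O_3)$ are detected exactly by the $\mu^{(4)}$-invariants indexed by non-repeated sequences, i.e. by $\mu^{(4)}(ijk)$ for $\{i,j,k\}$ a permutation of $\{1,2,3\}$ (the length-one and length-two non-repeated invariants being automatically equal, coming from meridians and linking data that agree for elements of $\mathcal{C}(O_3)$). So the claim reduces to the combinatorial assertion that $C\cdot D$ and $D\cdot C$ differ in some $\mu^{(4)}(ijk)$. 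I would compute these using the machinery of \cite{ABMW} exactly as in Equations (\ref{eq:12})--(\ref{eq:122}): read off the conjugating words $l_i$ from the successive over-arcs along each strand of the welded diagram, take Magnus expansions, and extract the coefficient of $X_i X_j$ (with final index $k$). The additivity Proposition \ref{lem:add} — or rather a direct expansion — shows $\mu^{(4)}_{C\cdot D}(ijk)$ and $\mu^{(4)}_{D\cdot C}(ijk)$ differ by the relevant "crossed" bilinear term built from the length-two data of $C$ and $D$, which for a well-chosen pair will be nonzero for some triple $ijk$; the braids $C,D$ in Figure \ref{fig:AB} are designed precisely so that, say, $\mu^{(4)}_{C\cdot D}(123)\neq \mu^{(4)}_{D\cdot C}(123)$.

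Finally I would assemble the pieces: the explicit computation shows $C\cdot D \not\stackrel{lh}{\sim} D\cdot C$ in $\mathcal{C}(O_3)$, hence $\bfrac{\mathcal{C}(O_3)}{\stackrel{lh}{\sim}}$ is non-abelian; then Proposition \ref{thm:groups}~(iii) transports this to $\bfrac{\mathcal{C}(L)}{\stackrel{lh}{\sim}}$ for every slice $n$-component link with $n\ge 3$ (after the trivial-strand stabilization for $n>3$), and since $\mathcal{C}(L)\subset \mathcal{C}^s(L)$ gives a monoid map $\bfrac{\mathcal{C}(L)}{\stackrel{lh}{\sim}}\to \bfrac{\mathcal{C}^s(L)}{\stackrel{lh}{\sim}}$ whose image contains the non-commuting pair, $\bfrac{\mathcal{C}^s(L)}{\stackrel{lh}{\sim}}$ is non-abelian as well. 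The main obstacle is the middle step: one must actually carry out the Magnus-expansion bookkeeping through the Tube map for a pair of $3$-strand welded braids and verify that the chosen pair genuinely produces distinct length-three non-repeated invariants after stacking in both orders — this requires care because many of the naive candidate pairs do commute up to link-homotopy (as $A$ and $B$ do for $n=2$), so the choice of $C$ and $D$, and the verification that the obstructing $\mu^{(4)}(123)$-type term survives, is the crux of the argument.
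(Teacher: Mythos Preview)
Your proposal is correct and follows essentially the same route as the paper: reduce to $L=O_n$ via Proposition~\ref{thm:groups}~(iii), use the welded pure braids $C$ and $D$ of Figure~\ref{fig:AB} (through the Tube map), distinguish $C\cdot D$ from $D\cdot C$ by the length-three non-repeated invariant $\mu^{(4)}(123)$, and then pass to $\mathcal{C}^s$ via the inclusion $\mathcal{C}(O_n)\subset\mathcal{C}^s(O_n)$. The paper supplies the explicit values $\mu^{(4)}_{C\cdot D}(123)=0$ and $\mu^{(4)}_{D\cdot C}(123)=1$, which is precisely the verification you flag as the crux.
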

\begin{proof}
Consider the $3$-component welded pure braids $C$ and $D$ shown on the right-hand side of Figure \ref{fig:AB}, and denote their respective images by the Tube map, by the same letter. 
Since these are elements of $\mathcal{C}(O_3)$, all indeterminacies $\overline{\Delta}(I)$ vanish and another simple computation gives that 
$$\mu^{(4)}_{C\cdot D}(123)=0\quad \textrm{and} \quad\mu^{(4)}_{D\cdot C}(123)=1. $$ 
As a consequence, $\bfrac{\mathcal{C}(O_n)}{\stackrel{lh}{\sim}}$ is not abelian for any $n\ge 3$.  
Moreover, since $C$ and $D$ are elements of $\mathcal{C}^s(O_n)$, the fact that  
$C\cdot D\stackrel{lh}{\nsim} D\cdot C$ more generally shows that $\bfrac{\mathcal{C}^s(O_n)}{\stackrel{lh}{\sim}}$ is not abelian for any $n\ge 3$.  
By Proposition \ref{thm:groups}~(iii), we have that the same holds for $\bfrac{\mathcal{C}(L)}{\stackrel{lh}{\sim}}$ and $\bfrac{\mathcal{C}^s(L)}{\stackrel{\small{lh}}{\sim}}$ for any slice link $L$ with $n\ge 3$ components.
\end{proof}

\bibliographystyle{abbrv}
\bibliography{paper}

\begin{thebibliography}{10}

\bibitem{Audoux}
B.~{A}udoux.
\newblock {O}n the welded tube map.
\newblock In {\em {K}not theory and its applications}, volume 670 of {\em
  {C}ontemp. {M}ath.}, pages 261--284. {A}mer. {M}ath. {S}oc., {P}rovidence,
  {RI}, 2016.

\bibitem{ABMW}
B.~{A}udoux, P.~{B}ellingeri, J.-B. {M}eilhan, and E.~{W}agner.
\newblock {H}omotopy classification of ribbon tubes and welded string links.
\newblock {\em {A}nn. {S}c. {N}orm. {S}uper. {P}isa {C}l. {S}ci.},
  17(1):713--761, 2017.

\bibitem{AMW}
B.~{A}udoux, J.-B. {M}eilhan, and E.~{W}agner.
\newblock {O}n codimension two embeddings up to link-homotopy.
\newblock {\em {J}. {T}opol.}, 10(4):1107--1123, 2017.

\bibitem{BT}
A.~{Bartels} and P.~{Teichner}.
\newblock {All two dimensions links are null homotopic.}
\newblock {\em {Geom. Topol.}}, 3:235--252, 1999.

\bibitem{CKS}
J.~{Carter}, S.~{Kamada}, and M.~{Saito}.
\newblock {\em Surfaces in 4-space}, volume 142 of {\em Encyclopaedia of
  Mathematical Sciences}.
\newblock Springer-Verlag, Berlin, 2004.
\newblock Low-Dimensional Topology, III.

\bibitem{Chen}
K.-T. {C}hen.
\newblock {C}ommutator calculus and link invariants.
\newblock {\em {P}roc. {A}mer. {M}ath. {S}oc.}, 3(4):44--55, 1952.

\bibitem{D}
J.~Darn\'e.
\newblock Milnor invariants of braids and welded braids up to homotopy.
\newblock arXiv:1904.10677.

\bibitem{Fenn}
R.~A. Fenn.
\newblock {\em Techniques of geometric topology}, volume~57 of {\em London
  Mathematical Society Lecture Note Series}.
\newblock Cambridge University Press, Cambridge, 1983.

\bibitem{Giffen}
C.~H. {Giffen}.
\newblock {Link concordance implies link homotopy}.
\newblock {\em {Math. Scand.}}, 45:243--254, 1979.

\bibitem{Goldsmith}
D.~L. {Goldsmith}.
\newblock {Concordance implies homotopy for classical links in \(M^3\)}.
\newblock {\em {Comment. Math. Helv.}}, 54:347--355, 1979.

\bibitem{H}
N.~{Habegger}.
\newblock {Applications of Morse theory to link theory}.
\newblock In {\em {Knots 90. Proceedings of the international conference on
  knot theory and related topics, held in Osaka, Japan, August 15-19, 1990}},
  pages 389--394. Berlin etc.: Walter de Gruyter, 1992.

\bibitem{HL}
N.~Habegger and X.-S. Lin.
\newblock The classification of links up to link-homotopy.
\newblock {\em J. Amer. Math. Soc.}, 3:389--419, 1990.

\bibitem{MKS}
W.~Magnus, A.~Karrass, and D.~Solitar.
\newblock {\em Combinatorial group theory : presentations of groups in terms of
  generators and relations}.
\newblock Dover books on advanced mathematics. Dover, New York, 1976.

\bibitem{tuncador}
S.~A. Melikhov.
\newblock Self {$C_2$}-equivalence of two-component links and invariants of
  link maps.
\newblock {\em J. Knot Theory Ramifications}, 27(13):1842012, 24, 2018.

\bibitem{Milnor}
J.~Milnor.
\newblock Link groups.
\newblock {\em Ann. of Math. (2)}, 59:177--195, 1954.

\bibitem{Milnor2}
J.~Milnor.
\newblock Isotopy of links. {A}lgebraic geometry and topology.
\newblock In {\em A symposium in honor of {S}. {L}efschetz}, pages 280--306.
  Princeton University Press, Princeton, N. J., 1957.

\bibitem{Roseman}
D.~{Roseman}.
\newblock {Reidemeister-type moves for surfaces in four-dimensional space.}
\newblock In {\em {Knot theory. Proceedings of the mini-semester, Warsaw,
  Poland, 1995}}, pages 347--380. Warszawa: Polish Academy of Sciences,
  Institute of Mathematics, 1998.

\bibitem{Satoh}
S.~Satoh.
\newblock Virtual knot presentation of ribbon torus-knots.
\newblock {\em J. Knot Theory Ramifications}, 9(4):531--542, 2000.

\bibitem{Stallings}
J.~Stallings.
\newblock Homology and central series of groups.
\newblock {\em J. Algebra}, 2:170--181, 1965.

\bibitem{yura}
E.~Yurasovskaya.
\newblock Homotopy string links over surfaces.
\newblock PhD Thesis, The University of British Columbia, 2008.

\end{thebibliography}

\end{document}